\renewcommand{\epsilon}{\varepsilon}
\newcommand{\newsection}[1]
{\subsection{#1}\setcounter{theorem}{0} \setcounter{equation}{0}
\par\noindent}
\newtheorem{theorem}{Theorem}
\newtheorem{thm}[theorem]{Theorem}
\newtheorem{cor}[theorem]{Corollary}
\newtheorem{lem}[theorem]{Lemma}
\newtheorem{prop}[theorem]{Proposition}
\theoremstyle{definition}
\newtheorem{defn}[theorem]{Definition}
\theoremstyle{remark}
\newtheorem{rem}[theorem]{Remark}
\newcommand{\norm}[1]{\left\Vert#1\right\Vert}
\newcommand{\cd}{\, \cdot\, }
\newcommand{\g}{{\mathrm g}}
\newcommand{\Real}{\mathbb R}
\newcommand{\less}{\lesssim}
\newcommand{\three}{\varepsilon}
\newcommand{\Rn}{{\mathbb R}^n}
\newcommand{\Z}{{\mathbb Z}}
\begin{document}




\setcounter{page}{1}
\renewcommand{\thefootnote}{\fnsymbol{footnote}}
\footnote[0]{2000\textit{ Mathematics Subject Classification}.
 Primary 35L05, 35B20. Secondary 35B65, 35B33.}
\title[General Strichartz Estimates and Applications]
{Generalized Strichartz Estimates on Perturbed Wave Equation and Applications on Strauss Conjecture}
\author{Xin Yu\\
Department of Mathematics, Johns Hopkins University}
\email{xyu@math.jhu.edu}
\thanks{The author is sincerely grateful to Christopher Sogge for his patient guidance and many helpful suggestions during the study on this subject. She would also like to thank Chengbo Wang for some stimulating discussions and comments on Section 3.}
\begin{abstract}
 In this paper we prove a general Strichartz estimate for certain perturbed wave equation, and here we can drop the
 nontrapping hypothesis and handle trapping obstacles with some loss of derivatives for data in the local energy decay
 estimates. We then give the obstacle version of sharp life span for semilinear wave equations when $n=3,p<p_c$, by using
 the real interpolation method, and by getting corresponding finite time Strichartz estimates (see Section 3). Finally,
 as another application, we get the Strauss conjecture for semilinear wave
 equations with several convex obstacles when $n=3,4$ (see Section 4).
\end{abstract}

\maketitle
\newsection{Introduction and Main Result}

The purpose of this paper is to prove a general Strichartz estimate
for certain perturbed wave equation under known local energy decay
estimates, and as applications, we get the Strauss conjecture for
several convex obstacles in $n=3,4$. Our results improve on earlier
work in Hidano, Metcalfe, Smith, Sogge and Zhou \cite{KJHCY}.
Firstly, we can drop the nontrapping hypothesis and handle trapping
obstacles with some loss of derivatives for data in the local energy
decay estimates (see \eqref{hypo} below). The hypothesis
\eqref{hypo} is fulfilled in many cases in the nontrapping case
where there is local decay of energy with no loss of derivatives
(see \cite{V}, \cite{MelSj}, \cite{taylor}, \cite{burq},
\cite{MRS}). \eqref{hypo} is also known to hold in several examples
involving hyperbolic trapped rays ( see \cite{Ikawa1}, \cite{Ikawa},
\cite{Christianson}). Secondly, we give the obstacle version of
sharp life span for semilinear wave equations when $n=3$ and
$p<p_c$, by using the real interpolation between the KSS estimate
and the endpoint Trace Lemma, and by getting a corresponding finite
time Strichartz estimate. This blowup result complements to the
Strauss Conjecture stated in \cite{KJHCY}, which only dealt with the
global existence part($p>p_c$) (see Section 3). Lastly, we are able
to use the general Strichartz estimates we have gained to get the
Strauss conjecture for some perturbed semilinear wave equations with
trapped rays when $n=3,4$ (see Section 4).

We consider wave equations on an exterior domain $\Omega \subset
\Real^n$:
\begin{equation}
\label{obwaveequation} \left\{
\begin{array}{rll}
&(\partial_t^2-\Delta_g)u = F(t,x)\;{\rm on}\;\;\mathbb{R}_+\times \Omega  \\
&u|_{t=0}= f,  \;\partial_t u|_{t=0}= g,  \\
&(Bu)(t,x)=0,\;{\rm on}\;\;\mathbb{R}_+\times \partial\Omega,\\
\end{array}
\right.
\end{equation}
where for simplicity we take $B$ to be either the identity operator
or the inward pointing normal derivative $\partial_v$. The operator
$\Delta_g$ is the Laplace-Beltrami operator associated with a
smooth, time independent Riemannian metric $g_{jk}(x)$ which we
assume equals the Euclidean metric $\delta_{jk}$ for $|x|\geq R$,
some $R$. The set $\Omega$ is assumed to be either all of $\Real^n$,
or the complement a subset of $|x|<R$ with smooth boundary. Note
that here we do not require that $
\Rn\backslash\Omega$ is nontrapping.\\

We will make the following local decay assumption:\\
\noindent \textbf{Hypothesis B.} Fix the  boundary operator $B$ and
the exterior domain $\Omega\subset \Real^n$ as above. We then assume
that given $R_0>0$
\begin{multline}
\label{hypo}
\int_0^S (\norm{u(t,\cdot)}_{H^1(|x|<R_0)}^2 +
\norm{\partial_tu(t,\cdot)}_{L^2(|x|<R_0)}^2)\;dt \\
\less \norm f_{\dot{H}^{1+\three}}^2+\norm
g_{\dot{H}^\three}^2+\int_0^S \norm{F(s,\cdot)}_{\dot{H}^\three}^2\; ds,
\end{multline}
where $u$ is a solution of \eqref{obwaveequation} with data $(f,g)$
and forcing term $F$ that both vanish for $|x|>R_0$.\\

\begin{rem}
\label{rem1.1} We assume $S$ to be finite time $T$ or $\infty$.
Moreover, although $\three$ could be any real number without
effecting our techniques much, here we assume $\three\geq 0$ is an
arbitrarily small number (which is all we need for now) throughout
the paper for clarity of explanation. Note that when $\three=0$ and
$T=\infty$, it is just the case in \cite{KJHCY}. More specifically,
when the obstacle is nontrapping, and $\Delta$ is  the standard
Euclidean Laplacian, we will have that local energy decays
exponentially in odd dimensions $n\geq 3$ and polynomially in even
dimensions except $n=2$ (\cite{Mel}); For $n=2$, local energy decays
like $O((\log(2+t))^{-2}(1+t)^{-1})$ (\cite{V}). These results imply
\eqref{hypo}. When $\Delta_g$ is a time-independent variable
coefficient compact perturbation of $\Delta$, one also has that
\eqref{hypo} is valid for the Dirichlet-wave equation for $n\geq 3$
as well for $n=2$ if $\partial\Omega\neq \emptyset$
(\cite{taylor},\cite{burq}). On the other hand, when there are
trapped rays, it is known that a uniform decay rate is generally not
possible (\cite{Ral1}), but we can get some local energy decay by
trading some derivatives in the initial data. Ikawa, for example,
got the following exponential decay when $n=3$ and there are several
convex obstacles which are far apart (see \cite{Ikawa}),
\begin{equation*}
\norm{u'(t,x)}_{L_x^2(|x|<1)}\less e^{-at}\norm {u'(0,x)}_{\dot H^{2}(|x|<1)},\quad {\rm where}\ a \;{\rm is \ a \ constant.}\\
\end{equation*}
 By interpolation between this estimate and the standard energy estimates, it is easy to get
\begin{equation}
\label{ikawaest}\norm{u'(t,x)}_{L_x^2(|x|<1)}\less e^{-ct}\left (\norm f_{\dot H^{1+\three}(|x|<1)}+\norm g_{\dot H^{\three}(|x|<1)}\right ),\quad {\rm where}\ c \;{\rm is \ a \ constant,}\\
\end{equation}
which implies our Hypothesis B. When there is only one hyperbolic
trapped ray, Christianson (\cite{Christianson}) also showed that for
all odd dimensions $n\geq3$ we have the local energy decay
\begin{equation}
\norm{u'(t,x)}_{L_x^2(|x|<1)}\less e^{-t^{1/2}/C}\norm
{u'(0,x)}_{\dot H^{1+\three}(|x|<1)},
\end{equation}
which gives Hypothesis B as well. Further work in this direction can
be seen in \cite{Burq1}, \cite{Burq2}, \cite{Christianson1},
\cite{Datchev}.
\end{rem}

In order to deal with the extra derivatives in \eqref{hypo}, we
introduce a Sobolev-type norm as follows.
\begin{defn}
Define $\tilde{H}^\gamma_\three(\Real^n)$ (and
$\tilde{H}^\gamma_\three(\Omega)$) to be the space with norm
\begin{equation}
\label{norm1}
\norm h _{\tilde{H}^\gamma_\three(\Rn)}=\norm {|D|^\gamma(1-\Delta)^{\frac\three 2}h}_{L^2_x(\Rn)}=\left (\int_{\Real^n}\big ||\xi|^\gamma(1+|\xi^2|)^{\frac{\three}{2}}\hat h(\xi)\big |^2\;d\xi\right )^{1/2}.\\
\end{equation}
\end{defn}
Notice that if $0\leq\three_1<\three_2$, then
$$\tilde{H}^\gamma_{\three_2}\subset \tilde{H}^\gamma_{\three_1}.$$
We also notice that, when $\three\geq0$ the above norm is equivalent
to the following useful form:
\begin{equation}
\label{norm2} \norm h _{\tilde{H}^\gamma_\three(\Rn)}\approx \norm
h_{\dot{H}^\gamma(|\xi|<1)}+\norm{h}_{\dot{H}^{\gamma+\three}(|\xi|>1)}\approx
\norm h_{\dot{H}^\gamma(\Rn)}+\norm{h}_{\dot{H}^{\gamma+\three}(\Rn)} .\\
\end{equation}

\begin{rem}
Similarly we can define the norm on a manifold $\Omega$ as in
\cite{KJHCY} and \cite{SS}. Roughly speaking,
\begin{equation*}
\|f\|_{\tilde H_\three^\gamma(\Omega)}=\|\beta f\|_{\tilde
H_\three^\gamma(\Omega')} +\|(1-\beta)f\|_{\tilde
H_\three^\gamma(\Real^n)},
\end{equation*}
where $\beta\in \mathbb{C}_0^\infty$ is supported in $|x|<2R$ and
equals $1$ in $|x|<R$, and $\Omega'$ is the embedding of
$\Omega\cap\{|x|<2R\}$ into the torus obtained by periodic extension
of $\Omega\cap [-2R,2R]^n$, so that $\partial\Omega' =
\partial\Omega$. Here the spaces $\tilde H_\three^\gamma(\Omega')$
are defined by a spectral decomposition of $\Delta_\g|_{\Omega'}$
subject to the boundary condition $B$.

When $\three=0$, it is easy to see our definitions coincides with
the homogenous Sobolev spaces and we have
$$\tilde H_0^\gamma(\Rn)=\dot H^\gamma(\Rn), ~\tilde H_0^\gamma(\Omega)=\dot H^\gamma(\Omega).$$
\end{rem}
Now we can redefine `admissible` using the above Sobolev-type norm.
\begin{defn}
We say that $(X,\gamma,\eta, p)$ is almost admissible if
it satisfies\\
i), Minkowski almost Strichartz estimates
\begin{equation}
\label{1.11} \norm u_{L_t^pX([0,S]\times\Real^n)}\less A(S)\left
(\norm{u(0,\cdot)}_{\dot{H}^\gamma(\Real^n)}+\norm{\partial_tu(0,\cdot)}_{\dot{H}^{\gamma-1}(\Real^n)}\right
),
\end{equation}
where $A(S)$ is a function of $S$ and equals a constant when
$S=\infty$.

\noindent ii), Local almost Strichartz estimates for $\Omega$
\begin{equation}
\label{1.12} \norm u_{L_t^pX([0,1]\times\Omega)}\less
\norm{u(0,\cdot)}_{\tilde{H}^\gamma_\eta(\Omega)}+\norm{\partial_tu(0,\cdot)}_{\tilde{H}^{\gamma-1}_\eta(\Omega)}.
\end{equation}
\end{defn}

Notice that the factor $A(S)$ only depends on the time $S$, and we
only consider when the time $S$ is large and $A(S)\gtrsim 1$. Also
notice that here we assume a weaker local Strichartz estimates by
losing some derivatives in the regularity of initial data, which
probably will happen when there are broken rays in the manifold. We
also assume $\eta\geq0$ is an arbitrarily small number in our
theorems, and actually in our applications we only need the case
where $\eta=0$.

We will assume $1-({n}/2)<\gamma<{n}/2$ throughout, so that
$(H_\gamma,\dot{H}_\gamma)$ and $(H_{1-\gamma},\dot{H}_{1-\gamma})$
are comparable pairs for functions supported in a ball. Besides, for
$\beta\in C_0^\infty(\Rn)$, with $\beta=1$ on a neighborhood of
$\Rn\backslash\Omega$, we assume that
$$
\norm{(1-\beta)f}_{X(\Omega)}\approx\norm{(1-\beta)f}_{X(\Rn)}.$$

Now we will state our main Strichartz estimates:
\begin{thm}
\label{mainest2} Let $n>2$ and assume that $(X,\gamma, \eta,p)$ is
almost admissible with
\begin{equation}
\label{pcondition} p>2 \;{\rm and}\; \gamma\in
[-\frac{n-3}{2},\frac{n-1}{2}).
\end{equation}
Then if Hypothesis B is valid and if $u$ solves
\eqref{obwaveequation} with forcing term $F=0$, we have the abstract
Strichartz estimates
\begin{equation}
\label{se} \norm u_{L_t^pX([0,S]\times\Omega)}\less
A(S)(\norm{f}_{\tilde{H}^\gamma_{\three+\eta}(\Omega)}+\norm{g}_{\tilde{H}^{\gamma-1}_{\three+\eta}(\Omega)}).
\end{equation}
\end{thm}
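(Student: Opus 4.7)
The plan is to split $u$ into a piece concentrated near the obstacle and a piece that satisfies a constant-coefficient wave equation on all of $\Real^n$, and to estimate the two using the local and Minkowski almost Strichartz hypotheses respectively, with Hypothesis B' used to control the coupling.

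Fix $\beta\in C_0^\infty(\Real^n)$ with $\beta\equiv 1$ on a neighborhood of $\Real^n\setminus\Omega$ and supported in $\{|x|<R_0\}$, and write $u=v+w$ with $v=\beta u$ and $w=(1-\beta)u$. Since $w$ vanishes near $\partial\Omega$ and outside the region where $g_{jk}=\delta_{jk}$, it extends to all of $\Real_+\times\Real^n$ and satisfies
\begin{equation*}
(\partial_t^2-\Delta)w=-[\Delta,\beta]u,\qquad (w,\partial_tw)|_{t=0}=((1-\beta)f,(1-\beta)g),
\end{equation*}
the forcing being supported in the annulus where $\nabla\beta\ne 0$. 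Correspondingly, $v$ satisfies the perturbed equation on $\Omega$ with initial data $(\beta f,\beta g)$ and forcing $+[\Delta_\g,\beta]u$.

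For the local piece $v$, apply \eqref{1.12} on each unit-length interval $[k,k+1]$. Because $v(k,\cd)$ is compactly supported in $\{|x|<R_0\}$, the hypothesis $1-\frac n2<\gamma<\frac n2$ together with Sobolev embedding for compactly supported functions allows $\norm{v(k,\cd)}_{\tilde H^\gamma_\eta(\Omega)}$ and $\norm{\partial_tv(k,\cd)}_{\tilde H^{\gamma-1}_\eta(\Omega)}$ to be dominated by $\norm{u(k,\cd)}_{H^s(|x|<R_0)}+\norm{\partial_tu(k,\cd)}_{H^{s-1}(|x|<R_0)}$ for an appropriate $s$ depending on $\gamma+\eta$. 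Summing over $k$, using the embedding $\ell^2_k\hookrightarrow\ell^p_k$ from $p\ge 2$, and applying Hypothesis B' to a suitable fractional derivative $(1-\Delta_\g)^{(s-1)/2}u$ (with commutators absorbed as lower-order errors), we obtain a bound by $A(S)(\norm{f}_{\tilde H^\gamma_{\three+\eta}(\Omega)}+\norm{g}_{\tilde H^{\gamma-1}_{\three+\eta}(\Omega)})$. The Duhamel contribution of $[\Delta_\g,\beta]u$ arising from the commutator is handled in a parallel fashion.

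For the far piece $w$, the initial data $((1-\beta)f,(1-\beta)g)$ has $\dot H^\gamma(\Rn)\times\dot H^{\gamma-1}(\Rn)$-norm controlled by $\norm{f}_{\tilde H^\gamma_{\three+\eta}(\Omega)}+\norm{g}_{\tilde H^{\gamma-1}_{\three+\eta}(\Omega)}$ thanks to the standing assumption on $(1-\beta)f$, so \eqref{1.11} bounds the homogeneous evolution directly. To bound the Duhamel evolution driven by $G=-[\Delta,\beta]u$, which is spatially supported in $\{|x|<R_0\}$, a $TT^*$ argument converts \eqref{1.11} into a dual trace-type estimate yielding
\begin{equation*}
\norm{w_{\mathrm{inh}}}_{L^p_tX(\Real_+\times\Real^n)}\less A(S)\norm{G}_{L^2_tL^2_x},
\end{equation*}
valid precisely in the range $\gamma\in[-\frac{n-3}{2},\frac{n-1}{2})$, which is where the relevant trace lemma holds for the Euclidean wave equation. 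Since $G$ is a first-order expression in $u$ localized on the annulus where $\nabla\beta\ne 0$, Hypothesis B' once more controls $\norm{G}_{L^2_tL^2_x}$ by the desired data norms.

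The main obstacle is the $TT^*$/trace-type reduction in the previous step: one has to convert the abstract Strichartz \eqref{1.11} into a Duhamel bound in terms of the $L^2_tL^2_x$-norm of spatially compactly supported forcing, valid across the full range $\gamma\in[-\frac{n-3}{2},\frac{n-1}{2})$, including at the endpoint where the argument is most delicate. A secondary but persistent technicality is matching the two sources of derivative loss---the $\three$-loss from \eqref{hypo} and the $\eta$-loss from \eqref{1.12}---into the single $\three+\eta$-loss on the right of \eqref{se}; this necessitates applying Hypothesis B' to fractional derivatives of $u$ whenever $\gamma+\eta$ exceeds the $H^1$-regularity at which the hypothesis is stated, and consequently requires care with the commutators between $\Delta_\g$, the cutoff $\beta$, and the fractional derivative operator.
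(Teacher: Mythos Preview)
Your outline is essentially the paper's argument, and the two technical difficulties you flag at the end are exactly the ones the paper spends its effort on. A couple of remarks on where your sketch is imprecise, and on how the paper organizes things.

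First, the Duhamel bound you want for $w_{\mathrm{inh}}$ is not $\lesssim A(S)\|G\|_{L^2_tL^2_x}$ but rather $\lesssim A(S)\|G\|_{L^2_t\dot H^{\gamma-1}}$; this is the content of the paper's Proposition~\ref{prop2.11}, and the range $\gamma\in[-\tfrac{n-3}2,\tfrac{n-1}2)$ enters because its proof pairs the Minkowski Strichartz estimate with the trace-type Lemma~\ref{2.2}. Since $G=-[\Delta,\beta]u$ is first order and compactly supported, one then needs $\|\rho u\|_{L^2_tH^\gamma}$ locally, which for $\gamma\ne 1$ is not what Hypothesis~B' gives directly. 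The paper closes this gap by first promoting \eqref{hypo} to an $H^\gamma\times H^{\gamma-1}$ statement via elliptic regularity and interpolation (the lemma containing \eqref{energyest1}--\eqref{energyest2}); your phrase ``applying Hypothesis~B' to a suitable fractional derivative with commutators absorbed'' is pointing at this, but the elliptic-regularity step is essential and should be made explicit.

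Second, the paper's organization differs from yours in a way that streamlines the bookkeeping. Rather than splitting $u=\beta u+(1-\beta)u$ immediately, the paper first proves the full estimate under the extra hypothesis that $(f,g)$ are compactly supported (Proposition~\ref{2.33}); this is where your time-localization $\sum_j\varphi(t-j)$ and $\ell^2\hookrightarrow\ell^p$ argument for $v=\beta u$ lives. Once that is in hand, general data are reduced to the compactly supported case by comparing $u$ to the \emph{free} solution $u_0$ on $\Real^n$ with the same data: one writes $u=(1-\beta)u_0+(\beta u_0-v)$, estimates the first piece by the Minkowski Strichartz estimate plus Proposition~\ref{prop2.11} and Lemma~\ref{2.2}, and feeds the second piece (which now has compactly supported forcing and zero data) into Proposition~\ref{2.33}. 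This two-stage reduction avoids having to treat non-compactly supported data inside the delicate near-obstacle analysis and makes the $\three$ and $\eta$ losses combine transparently.
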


\begin{rem} We need $-(n-3)/2\leq\gamma<({n-1})/2$ since we will use
Lemma \ref{2.2}, which requires $\gamma+\three+\eta\leq\frac{n-1}2$
for a sufficiently small number $\three$, thus precisely $\gamma\in
[-(n-3)/{2}, ({n-1})/{2}-\three-\eta]$. On the other hand, when
$\three$ is allowed to take large values, which depends on our local
energy estimates, we can easily adapt our arguments to show
$$\norm u_{L_t^pX([0,S]\times\Omega)}\less
A(S)(\norm{f}_{\tilde{H}^\gamma_{2\three+\eta}(\Omega)}+\norm{g}_{\tilde{H}^{\gamma-1}_{2\three+\eta}(\Omega)})$$
under the assumption $\gamma\in[-(n-3)/2,(n-1)/2]$.
\end{rem}

Next we will see two corollaries that involve adding forcing term to
the equation.
\begin{cor}
\label{cor1.5}  Assume that $(X,\gamma,\eta,p)$ and
$(Y,1-\gamma,\eta,r)$ are almost admissible  and that Hypothesis B
is valid. Also assume that \eqref{pcondition} holds for
$(X,\gamma,\eta,p)$ and $(Y,1-\gamma,\eta,r)$. Then we have the
following global abstract Strichartz estimates for the solution of
\eqref{obwaveequation}
\begin{equation}\label{mainest4}
\|u\|_{L^p_tX([0,S]\times\Omega)}\lesssim A(S)(\|f\|_{\tilde
H_{\three+\eta}^{\gamma}(\Omega)} +\|g\|_{\tilde
H^{\gamma-1}_{\three+\eta}(\Omega)})
+A^2(S)\|\Lambda^{2(\three+\eta)}F\|_{L^{r'}_tY'([0,S]\times
\Omega)},
\end{equation}
where $\Lambda=(1-\Delta)^{1/2}$, $r'$ denotes the conjugate
exponent to $r$ and $\|\, \cd\|_{Y'}$ is the dual norm to
$\|\cd\|_Y$.
\end{cor}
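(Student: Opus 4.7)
The plan is a $TT^*$-style duality argument building on Theorem \ref{mainest2}. By linearity, decompose $u = u_0 + v$ into the homogeneous solution $u_0$ with data $(f,g)$ and the Duhamel contribution
\begin{equation*}
v(t) = \int_0^t W(t-s) F(s)\,ds, \qquad W(\tau) := \frac{\sin(\tau\sqrt{-\Delta_g})}{\sqrt{-\Delta_g}}.
\end{equation*}
Theorem \ref{mainest2} applied with $(X,\gamma,\eta,p)$ bounds $\|u_0\|_{L^p_tX}$ by the first term on the right of \eqref{mainest4}, so the task reduces to bounding $v$. I would first estimate the untruncated variant $V(t) := \int_0^S W(t-s) F(s)\,ds$ and recover the bound on $v$ at the end via the Christ--Kiselev lemma, which applies since $p > 2 > r'$.

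Expanding the sine by the addition formula gives $V(t) = W(t)\tilde g - C(t)\tilde f$ with $C(\tau) := \cos(\tau\sqrt{-\Delta_g})$ and
\begin{equation*}
\tilde f := \int_0^S W(s)F(s)\,ds, \qquad \tilde g := \int_0^S C(s)F(s)\,ds,
\end{equation*}
so that $V$ is itself a homogeneous solution with Cauchy data $(-\tilde f, \tilde g)$. Theorem \ref{mainest2} therefore yields $\|V\|_{L^p_tX} \lesssim A(S)(\|\tilde f\|_{\tilde H^{\gamma}_{\three+\eta}} + \|\tilde g\|_{\tilde H^{\gamma-1}_{\three+\eta}})$. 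The two Sobolev norms are then estimated by duality: a direct spectral computation gives the identity $\|h\|_{\tilde H^{\gamma-1}_{\three+\eta}} = \|\Lambda^{2(\three+\eta)}h\|_{(\tilde H^{1-\gamma}_{\three+\eta})^*}$ under the $L^2$ pairing, and $\Lambda$ commutes with $C(s)$ through the same spectral calculus, so
\begin{equation*}
\|\tilde g\|_{\tilde H^{\gamma-1}_{\three+\eta}} = \sup_{\|\phi\|_{\tilde H^{1-\gamma}_{\three+\eta}}\le 1} \Bigl| \int_0^S \langle \Lambda^{2(\three+\eta)} F(s), C(s)\phi\rangle\,ds\Bigr|.
\end{equation*}
By H\"older in $(s,x)$ and the almost admissibility of $(Y,1-\gamma,\eta,r)$ applied to the wave solution $C(\cdot)\phi$ (which has data $(\phi,0)$), this supremum is $\lesssim A(S)\|\Lambda^{2(\three+\eta)}F\|_{L^{r'}_tY'}$. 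An identical argument with $W(s)\phi$ (data $(0,\phi)$) in place of $C(s)\phi$ controls $\|\tilde f\|_{\tilde H^{\gamma}_{\three+\eta}}$, and combining these bounds produces the second term of \eqref{mainest4} with the factor $A^2(S)$.

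The main bookkeeping point is the loss of $2(\three+\eta)$ derivatives: one copy of $\three+\eta$ appears through \eqref{se} for $(X,\gamma,\eta,p)$ and the second through the dualized \eqref{se} for $(Y,1-\gamma,\eta,r)$, and these merge via the Bessel-weight calculation above to produce the single factor $\Lambda^{2(\three+\eta)}$ on $F$. Technical care is needed only to justify the commutation of $\Lambda$ with $C(s), W(s)$ on $\Omega$, which follows from the spectral construction of the $\tilde H$-norms on $\Omega$ recalled after \eqref{norm2}.
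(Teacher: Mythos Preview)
Your argument is correct and follows essentially the same $TT^*$/duality route as the paper: reduce to the Duhamel term, apply Christ--Kiselev (using $p>2>r'$), view the untruncated integral as a homogeneous solution, invoke Theorem~\ref{mainest2} for $(X,\gamma,\eta,p)$ to extract one factor of $A(S)$, and then dualize Theorem~\ref{mainest2} for $(Y,1-\gamma,\eta,r)$ to extract the second, with the Bessel shift $\Lambda^{2(\three+\eta)}$ absorbing the combined loss. The paper phrases this with the half-wave propagator $e^{-it|D|}$ rather than your $\cos/\sin$ decomposition, but the two are equivalent.

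One small point of language: where you write ``the almost admissibility of $(Y,1-\gamma,\eta,r)$ applied to the wave solution $C(\cdot)\phi$'', what you actually need is the conclusion \eqref{se} of Theorem~\ref{mainest2} for $(Y,1-\gamma,\eta,r)$ on $\Omega$, not merely the defining estimates \eqref{1.11}--\eqref{1.12}. Since the corollary's hypotheses ensure Theorem~\ref{mainest2} applies to $(Y,1-\gamma,\eta,r)$, this is only a wording issue, not a gap.
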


\begin{proof}
Since
$$\norm h _{\tilde{H}^\gamma_\three} = \norm{|D|^\gamma\Lambda^\three h}_{L^2},\quad \three\geq0,$$
it is easy to see that the dual norm is
$$\norm h _{(\tilde{H}^\gamma_\three)'}=\norm h _{\tilde{H}^{-\gamma}_{-\three}}=\norm{|D|^{-\gamma}\Lambda^{-\three} h}_{L^2}.$$
To prove \eqref{mainest4}, we may assume by \eqref{se} that the
initial data vanishes. If $|D|=\sqrt{-\Delta_\g}$ is the square root
of minus the Laplacian (with the boundary conditions $B$), then we
need to show
\begin{multline}
\Bigl\|\,\int_0^t e^{-i(t-s)|D|}|D|^{-1}F(s,\cd)\, ds\,
\Bigr\|_{L_t^pX([0,S]\times\Omega)}\\
\lesssim A^2(S)\|\Lambda^{2(\three+\eta)}F\|_{L^{r'}_tY'([0,S]\times \Omega)}\,.
\end{multline}

We have $p>2>r'$, So \eqref{se} and an application of the
Christ-Kiselev Lemma (cf. \cite{CK}) imply that it suffices to prove
the estimate
\begin{multline}
\label{to}
\Bigl\|\,\int_0^S e^{-is|D|}|D|^{-1}F(s,\cd)\, ds\,
\Bigr\|_{\tilde H^\gamma_{\three+\eta}(\Omega)}=\Bigl\|\,\int_0^S e^{-is|D|}|D|^{-1+\gamma}\Lambda^{\three+\eta}F(s,\cd)\, ds\,
\Bigr\|_{L^2(\Omega)}\\
\lesssim A(S)\|\Lambda^{2(\three+\eta)}F\|_{L^{r'}_tY'([0,S]\times \Omega)}\,.
\end{multline}
Note that duality of \eqref{se} for $(Y,1-\gamma,\eta,r)$ gives
$$\Bigl\|\,\int_0^S e^{-is|D|}F(s,\cd)\, ds\,
\Bigr\|_{\tilde{H}_{-\three-\eta}^{\gamma-1}(\Omega)} \less
A(S)\|F\|_{L^{r'}_tY'([0,S]\times \Omega)}\,,
$$
i.e.
\begin{equation}
\label{from}
\Bigl\|\,\int_0^S e^{-is|D|}|D|^{\gamma-1}\Lambda^{-\three-\eta} F(s,\cd)\, ds\,
\Bigr\|_{L^2(\Omega)}
\less A(S)\|F\|_{L^{r'}_tY'(\Real_+\times \Omega)}\,.
\end{equation}
Now \eqref{to} follows from \eqref{from}.\\
\end{proof}

As a special case of \eqref{mainest4} when the spaces $X$ and $Y$
are the standard Lebesgue spaces, we have the following obstacle
version of Strichartz estimates in Minkowski space obtained by
Keel-Tao \cite{KT}.

\begin{cor}\label{mixed}  Suppose that $n\ge 3$ and that Hypothesis B is
valid.  Suppose that $p,r>2$, $q,s\ge 2$ and that
$$
\frac1p+\frac{n}q=\frac{n}2-\gamma=\frac1{r'}+\frac{n}{s'}-2
$$
and
$$
\frac2p+\frac{n-1}q\,, \, \frac2{r}+\frac{n-1}{s}\le \frac{n-1}2 .
$$
Then if the local Strichartz estimate \eqref{1.12} holds
respectively for $\bigl(L^q(\Omega),\gamma,\eta,p\bigr)$ and
$\bigl(L^{s}(\Omega),1-\gamma,\eta,r\bigr)$, it follows that when
$u$ solves \eqref{obwaveequation},
$$
\|u\|_{L^p_tL^q_x(\Real_+\times\Omega)}
\lesssim \|f\|_{\tilde H_{\three+\eta}^\gamma(\Omega)}+\|g\|_{\tilde H^{\gamma-1}_{\three+\eta}(\Omega)}+
\|(1-\Delta)^{\three+\eta}F\|_{L^{r'}_t\!L^{s'}_x(\Real_+\times\Omega)}.
$$
\end{cor}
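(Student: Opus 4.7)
The plan is to derive this as the Lebesgue-space instance of Corollary~\ref{cor1.5}. I will take the abstract spaces to be $X=L^q(\Omega)$ and $Y=L^s(\Omega)$, verify that the tuples $(L^q,\gamma,\eta,p)$ and $(L^s,1-\gamma,\eta,r)$ are almost admissible in the sense of Definition~1.4, and confirm the range condition~\eqref{pcondition} for both. Once that is done, Corollary~\ref{cor1.5} delivers the bound with $\|\Lambda^{2(\three+\eta)}F\|_{L^{r'}_tL^{s'}_x}$ on the right-hand side. Since $\Lambda^2=1-\Delta$, that quantity coincides with $\|(1-\Delta)^{\three+\eta}F\|_{L^{r'}_tL^{s'}_x}$, and the factor $A(S)$ collapses to a constant on $\Real_+$ because the free-space Strichartz estimate we will invoke is global in time.

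For the Minkowski half of almost admissibility, namely~\eqref{1.11}, I would cite the classical free-wave Strichartz estimates (Ginibre--Velo, Keel--Tao). The scaling identity $\tfrac1p+\tfrac nq=\tfrac n2-\gamma$ is precisely the homogeneity relating $L^p_tL^q_x$ to $\dot H^\gamma\times\dot H^{\gamma-1}$; together with the hypothesis $\tfrac2p+\tfrac{n-1}q\le\tfrac{n-1}2$, the strict inequality $p>2$, and $q\ge 2$, this places $(p,q)$ inside the wave-admissible range away from the sharp endpoint, which yields \eqref{1.11} with a constant independent of $S$. For the companion pair, the hypothesis $\tfrac1{r'}+\tfrac n{s'}-2=\tfrac n2-\gamma$ rearranges to $\tfrac1r+\tfrac ns=\tfrac n2-(1-\gamma)$, so the same classical estimate, combined with $\tfrac2r+\tfrac{n-1}s\le\tfrac{n-1}2$ and $r>2$, gives \eqref{1.11} for $(L^s,1-\gamma,\eta,r)$. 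The local half~\eqref{1.12} is exactly what the corollary's hypothesis supplies for each of the two pairs.

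The remaining task is to check~\eqref{pcondition}. With $p,r>2$ given, writing $\gamma=\tfrac n2-\tfrac1p-\tfrac nq$ and using $q\ge 2$ with $\tfrac 2p+\tfrac{n-1}q\le\tfrac{n-1}2$ pins $\gamma$ inside $[-\tfrac{n-3}2,\tfrac{n-1}2)$ by a short algebraic bound; the dual scaling applied to $(r,s)$ does the same for $1-\gamma$. I do not expect a genuine obstacle here. The substantive work, namely the Christ--Kiselev argument and duality built on Theorem~\ref{mainest2}, has already been carried out in Corollary~\ref{cor1.5}, and the present statement is essentially a translation to Lebesgue spaces where the abstract admissibility hypothesis becomes a concrete and familiar scaling-plus-admissibility condition.
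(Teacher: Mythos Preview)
Your approach is exactly the paper's: the corollary is stated there as an immediate specialization of Corollary~\ref{cor1.5} to $X=L^q$, $Y=L^s$, with no separate argument given. One refinement on your range check: $\gamma<\tfrac{n-1}2$ does not follow from the $(p,q)$ hypotheses alone (for $n=3$ with $p,q$ both large one gets $\gamma\to\tfrac32$), but the $(r,s)$ scaling and admissibility yield $1-\gamma\ge 0$ by the symmetric computation, so the two sets of conditions together force $\gamma\in[0,1]$, which for $n\ge 3$ lies in the required interval.
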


Note that the estimates we obtain here do not cover the endpoint
case $q=2$ or $\tilde q=2$, or the case $n=2$, which are proved to
be true for $\Rn$ and $\Delta_g=\Delta$ by Keel-Tao \cite{KT}. As
for the assumption \eqref{1.12}, Smith and Sogge \cite{SS0} showed
that it holds when $\Omega$ is the exterior of a geodesically convex
obstacle, and their results apply to the case where there are
finitely many convex obstacles by finite propagation of speed. More
work in this direction can be found in \cite{BLP}, \cite{BP},
\cite{BSS}, \cite{SS2}, but only partial results with smaller range
of $q,~r,~\gamma$ have been gained.


\newsection{Proof of Theorem ~\ref{mainest2}}

In this section we will see how local Strichartz estimates and
global Strichartz estimates in Minkowski space imply the (almost)
global Strichartz estimates in a general domain $\Omega$. The first
Lemma is key to achieve Proposition \ref{prop2.11} and will be used
in Theorem \ref{mainest2} as well.
\begin{lem}
\label{2.2}
Fix $\beta\in C_0^\infty(\Real^n)$ and assume that
$\gamma\leq \frac{n-1}{2}$. Then
\begin{equation}
\label{free est1} \int_{-\infty}^\infty \norm
{\beta(\cdot)(e^{it|D|}f)(t,\cdot)}_{H^\gamma(\Real^n)}^2\; dt \less
\norm f_{\dot{H}^\gamma(\Real^n)}^2,
\end{equation}
if $|D|=\sqrt{-\Delta}$.
\end{lem}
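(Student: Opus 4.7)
The plan is to apply Plancherel in the time variable. Using the identity $\int_{-\infty}^\infty e^{it(|\xi|-\tau)}\,dt = 2\pi\delta(|\xi|-\tau)$, a direct computation shows that the temporal Fourier transform of $\beta(x) e^{it|D|}f(x)$ vanishes for $\tau<0$ and, for $\tau>0$, equals
\[
\mathcal{F}_t\!\bigl(\beta e^{it|D|}f\bigr)(\tau,x) = c_n\,\beta(x)\,\tau^{n-1}\!\int_{S^{n-1}} e^{i\tau x\cdot\omega}\,\hat f(\tau\omega)\,d\sigma(\omega) =: \beta(x)\phi(\tau,x).
\]
By Plancherel, the left-hand side of \eqref{free est1} becomes, up to a constant, $\int_0^\infty \|\beta\phi(\tau,\cdot)\|_{H^\gamma(\Real^n)}^2\,d\tau$, so it suffices to bound each $\tau$-slice.

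The function $\phi(\tau,\cdot)$ has spatial Fourier transform concentrated on the sphere $|\xi|=\tau$, so it is (formally) a generalized eigenfunction of $-\Delta$ with eigenvalue $\tau^2$. Since multiplying by $\beta\in C_0^\infty$ spreads the frequencies only by a Schwartz-class amount, a standard Bernstein-type comparison gives
\[
\|\beta\phi(\tau,\cdot)\|_{H^\gamma(\Real^n)}^2 \lesssim (1+\tau^2)^\gamma\,\|\beta\phi(\tau,\cdot)\|_{L^2(\Real^n)}^2 ,
\]
which reduces matters to controlling the $L^2_x$ norm. I would expand the square and write
\[
\|\beta\phi(\tau,\cdot)\|_{L^2}^2 = c_n^2\tau^{2(n-1)}\!\iint_{S^{n-1}\times S^{n-1}} \widehat{\beta^2}\!\bigl(\tau(\omega-\omega')\bigr)\,\hat f(\tau\omega)\,\overline{\hat f(\tau\omega')}\,d\sigma(\omega)\,d\sigma(\omega')
\]
and apply Schur's test to the kernel $K(\omega,\omega'):=\widehat{\beta^2}(\tau(\omega-\omega'))$. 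The Schwartz decay of $\widehat{\beta^2}$ effectively restricts $K$ to the cap $|\omega-\omega'|\lesssim \tau^{-1}$, which has $\sigma$-measure $\sim \tau^{-(n-1)}$ on $S^{n-1}$, giving $\sup_\omega\int |K|\,d\sigma(\omega')\lesssim \min(\tau^{-(n-1)},1)$, and hence
\[
\|\beta\phi(\tau,\cdot)\|_{L^2}^2 \lesssim \tau^{2(n-1)}\min(\tau^{-(n-1)},1)\int_{S^{n-1}}|\hat f(\tau\omega)|^2\,d\sigma(\omega).
\]

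Combining these estimates and converting the $\tau$-integration to an integration over $\xi=\tau\omega$ via polar coordinates ($d\xi=\tau^{n-1}\,d\tau\,d\sigma$) reduces the desired inequality to
\[
\int_{|\xi|\le 1}|\hat f(\xi)|^2|\xi|^{n-1}\,d\xi + \int_{|\xi|\ge 1}|\hat f(\xi)|^2|\xi|^{2\gamma}\,d\xi \lesssim \|f\|_{\dot H^\gamma(\Real^n)}^2 .
\]
The high-frequency term is immediate. For the low-frequency term, the hypothesis $\gamma\le\tfrac{n-1}{2}$ forces $|\xi|^{n-1}\le|\xi|^{2\gamma}$ on $|\xi|\le 1$, so it too is bounded by $\|f\|_{\dot H^\gamma}^2$. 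The principal technical step is the Schur-test computation on the sphere, where the Schwartz decay of $\widehat{\beta^2}$ is essential; the restriction $\gamma\le (n-1)/2$ enters only through the low-frequency regime and plays no role at high frequencies.
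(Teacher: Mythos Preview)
The paper itself gives no argument here; it simply refers to Lemma~2.2 of Smith--Sogge \cite{SS}. Your approach---Plancherel in $t$, writing the temporal transform as a spherical average at radius $\tau$, and then a Schur-test bound on $S^{n-1}\times S^{n-1}$ with kernel $\widehat{\beta^2}(\tau(\omega-\omega'))$---is exactly the standard proof in that reference, and your identification of the low-frequency regime as the sole place where $\gamma\le\frac{n-1}{2}$ enters is correct.

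One step deserves a word of caution. The ``Bernstein-type comparison'' $\|\beta\phi(\tau,\cdot)\|_{H^\gamma}^2\lesssim(1+\tau^2)^\gamma\|\beta\phi(\tau,\cdot)\|_{L^2}^2$ is not quite the textbook Bernstein inequality, since $\beta\phi$ does not have compact spectrum---multiplication by $\beta$ gives genuine Schwartz tails, and for $\gamma<0$ one has to rule out that low-frequency leakage dominates. The cleanest fix is to merge this step with your Schur computation rather than separating them: use Peetre's inequality $\langle\xi\rangle^{\gamma}\lesssim\langle\tau\rangle^{\gamma}\langle\xi-\tau\omega\rangle^{|\gamma|}$ inside the $\xi$-integral, which replaces $\widehat{\beta}$ by another Schwartz function and leads directly to
\[
\|\beta\phi(\tau,\cdot)\|_{H^\gamma}^2\;\lesssim\;\langle\tau\rangle^{2\gamma}\,\tau^{2(n-1)}\min\bigl(1,\tau^{-(n-1)}\bigr)\int_{S^{n-1}}|\hat f(\tau\omega)|^2\,d\sigma(\omega),
\]
after which your final paragraph goes through unchanged. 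This is a cosmetic tightening rather than a gap; the substance of your argument is correct and coincides with the cited proof.
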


\begin{proof}
Refer to Lemma 2.2 in \cite{SS}.
\end{proof}

Now we introduce a result which will be used to control the solution
of \eqref{obwaveequation} away from the obstacle.
\begin{prop}
\label{prop2.11} Consider the wave equation
\begin{equation}
\label{inhomowaveequation} \left\{
\begin{array}{rll}
(\partial_t^2-\Delta)u &=& F(t,x)\quad {\rm on}\;\;\mathbb{R}_+\times \mathbb{R}^n  \\
u|_{t=0}&=& f,  \\
\partial_t u|_{t=0}&=& g.
\end{array}
\right.
\end{equation}
Let $w$ be a solution with $f=g=0$, and assume that \eqref{1.11} is
valid whenever $v$ is a solution of the homogeneous wave equation.
Assume further that $p>2, \gamma\geq({n-3})/2$. Then, if
$$F(t,x)=0 \quad {\rm if} \; |x|>2R,$$
we have
\begin{equation}
\norm w_{L_t^pX([0,S]\times\Real^n)}\less A(S)\norm
F_{L_t^2\dot{H}^{\gamma-1}([0,S]\times\Real^n)}.
\end{equation}
\end{prop}

\begin{proof}
When $S=\infty$, this is just Proposition 2.1 in \cite{KJHCY}.
Moreover, their argument is easily modified to give the proof when
$S=T$ is finite.
\end{proof}

The next Lemma gives two useful local decay estimates. The first one
is necessary to prove Theorem \ref{mainest2}, and the second one
will be applied in the next two sections.
\begin{lem}
Let $u$ solve \eqref{obwaveequation} and assume that Hypothesis B
holds. Let $\beta\in C_0^\infty(\Real^n)$ equal 1 on a neighborhood
of $\Real^n\backslash\Omega$.
Then we have the following estimates:\\
i), if $f,g,$ and $F$ are supported in $|x|<2R$,
\begin{equation}
\begin{aligned}
\label{energyest1}
\norm {\beta u}_{L_t^\infty H_B^\gamma([0,S]\times\Omega)}+\norm {\beta \partial_t u}_{L_t^\infty H_B^{\gamma-1}([0,S]\times\Omega)}&+\norm{\beta u}_{L_t^2H_B^\gamma([0,S]\times\Omega)}+\norm{\beta \partial_tu}_{L_t^2H_B^{\gamma-1}([0,S]\times\Omega)}\\
&\hspace{-0.3 in}\less \norm f_{\dot{H}_B^{\gamma+\three}(\Omega)}+
\norm g_{\dot{H}_B^{\gamma+\three-1}(\Omega)}+ \norm
F_{L_t^2\dot{H}_B^{\gamma+\three-1}([0,S]\times\Omega)}.
\end{aligned}\\
\end{equation}
ii), if F is supported in $|x|<R$, $\gamma<\frac{n-1}2$,
\begin{equation}
\begin{aligned}
\label{energyest2}
\norm {u}_{L_t^\infty \dot{H}_B^\gamma([0,S]\times\Omega)}+\norm {\partial_t u}_{L_t^\infty \dot{H}_B^{\gamma-1}([0,S]\times\Omega)}&+\norm{\beta u}_{L_t^2H_B^\gamma([0,S]\times\Omega)}+\norm{\beta \partial_tu}_{L_t^2H_B^{\gamma-1}([0,S]\times\Omega)}\\
&\hspace{-0.3 in}\less \norm f_{\tilde{H}_\three^{\gamma}(\Omega)}+
\norm g_{\tilde{H}_\three^{\gamma-1}(\Omega)}+ \norm
F_{L_t^2\dot{H}_B^{\gamma+\three-1}([0,S]\times\Omega)}.
\end{aligned}
\end{equation}\\
\end{lem}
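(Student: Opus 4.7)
The two estimates (i) and (ii) share a common $L^2_t$ local-energy piece and differ only in whether the $L^\infty_t$ bound is taken after the cutoff $\beta$ (as in (i), where all data are compactly supported) or globally (as in (ii), where only the forcing $F$ has compact spatial support). The plan is to deduce both by combining Hypothesis B' with the standard energy identity for the exterior wave equation, using spectral calculus in $\Delta_\g$ (subject to the boundary operator $B$) to promote the $\gamma=1$ regularity of Hypothesis B' to the general admissible $\gamma$.

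For the $L^2_t$ halves, note that $(-\Delta_\g)^{(\gamma-1)/2}$ commutes with $\partial_t^2-\Delta_\g$ and with $B$, so $v:=(-\Delta_\g)^{(\gamma-1)/2}u$ solves the same wave equation with data $(-\Delta_\g)^{(\gamma-1)/2}(f,g)$ and forcing $(-\Delta_\g)^{(\gamma-1)/2}F$. I would apply Hypothesis B' to $v$ with $R_0$ slightly larger than $2R$; the complication is that the fractional power destroys compact support. I handle this by introducing an auxiliary cutoff $\tilde\beta\in C_c^\infty$ equal to one on $\{|x|\le 2R\}$ and splitting $(-\Delta_\g)^{(\gamma-1)/2}(f,g,F)$ into its compactly-supported piece $\tilde\beta\,(-\Delta_\g)^{(\gamma-1)/2}(f,g,F)$, to which Hypothesis B' directly applies, and a smoothing tail $(1-\tilde\beta)(-\Delta_\g)^{(\gamma-1)/2}(f,g,F)$ controlled by $\norm{f}_{\dot H_B^{\gamma+\three}}+\norm{g}_{\dot H_B^{\gamma+\three-1}}+\norm{F}_{L^2_t\dot H_B^{\gamma+\three-1}}$ via the pseudolocal mapping properties of $(-\Delta_\g)^{s}$ on $\Omega$ developed in \cite{MSYY} and \cite{SS}. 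Undoing $v=(-\Delta_\g)^{(\gamma-1)/2}u$ and comparing $\norm{\beta\,\cdot\,}_{H_B^\gamma(\Omega)}$ with $\norm{\cdot}_{H_B^\gamma(|x|<R_0)}$ then yields the $L^2_t$ half of both (i) and (ii).

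For the $L^\infty_t$ halves, decompose $u=u_{\mathrm{hom}}+u_{\mathrm{inhom}}$. The homogeneous part satisfies the standard energy identity at regularity $\gamma$ (derived by pairing $(-\Delta_\g)^{\gamma-1}\partial_tu_{\mathrm{hom}}$ against the equation via spectral calculus),
$$\norm{u_{\mathrm{hom}}(t)}_{\dot H_B^\gamma}+\norm{\partial_tu_{\mathrm{hom}}(t)}_{\dot H_B^{\gamma-1}}\less\norm{f}_{\dot H_B^\gamma}+\norm{g}_{\dot H_B^{\gamma-1}},$$
uniformly in $t$. The inhomogeneous part $u_{\mathrm{inhom}}(t)=\int_0^t\sin((t-s)|D|_\g)|D|_\g^{-1}F(s)\,ds$ is handled by a TT*/duality argument: applying the $L^2_t$ local-energy estimate just established with $1-\gamma$ in place of $\gamma$ and dualizing---exactly as in the proof of Corollary \ref{cor1.5}---converts spatially-compactly-supported $L^2_t\dot H_B^{\gamma+\three-1}$ forcing $F$ into an $L^\infty_t\dot H_B^\gamma$ bound on $u_{\mathrm{inhom}}$. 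In part (ii) this directly gives the global $L^\infty_t\dot H_B^\gamma$ estimate; in part (i), the extra cutoff by $\beta$ together with the equivalence $\dot H_B^\gamma\approx H_B^\gamma$ on the compact supports of $f,g,F$ (valid since $1-n/2<\gamma<n/2$) finishes the argument.

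The main obstacle is the first step: handling the tail $(1-\tilde\beta)(-\Delta_\g)^{(\gamma-1)/2}$ acting on compactly supported data on the exterior domain with boundary condition $B$. On $\Real^n$ this is standard from pseudolocality, but on $\Omega$ with boundary it relies on the spectral-multiplier theory of $\Delta_\g|_\Omega$ encoded in the definition of the tilded Sobolev spaces in \cite{MSYY} and \cite{SS}, together with the hypothesis $1-n/2<\gamma<n/2$ that guarantees the equivalence of $\dot H_B^\gamma$ and $H_B^\gamma$ on compact sets and permits the absorption of low-order pieces into the right-hand side. Once this mapping is in hand, the remainder is an accounting exercise combining Hypothesis B', the standard energy identity, and the duality argument used in the proof of Corollary \ref{cor1.5}.
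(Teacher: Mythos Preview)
Your outline has the right shape for the $L^\infty_t$ halves but misses the key mechanism in the $L^2_t$ step, and the gap is decisive in part (ii).

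For part (i), the paper does not conjugate by $(-\Delta_\g)^{(\gamma-1)/2}$. Instead it first establishes the $L^2_t$ estimate for integer $\gamma$ by combining Hypothesis~B' with \emph{elliptic regularity} (so all operators involved are differential and everything stays spatially local), and then interpolates to non-integer $\gamma$. This sidesteps the non-locality problem entirely. Your splitting $\tilde\beta(-\Delta_\g)^{(\gamma-1)/2}(f,g,F) + (1-\tilde\beta)(\ldots)$ does control the \emph{size} of the tail data by pseudolocality, but you then have to solve the wave equation with that (non-compactly-supported) tail data and still obtain an $L^2_t$-in-time local-energy bound on the resulting solution. Hypothesis~B' does not apply to that piece, and you supply no replacement.

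The gap is fatal in part (ii). There the initial data $(f,g)$ carry no support restriction at all, so your splitting of $(-\Delta_\g)^{(\gamma-1)/2}(f,g)$ produces a ``tail'' that is not a smoothing error---it is genuinely large. The paper handles this by a completely different decomposition: after reducing via (i) to data vanishing for $|x|\le\tfrac32 R$, it writes the homogeneous solution as $(1-\eta)v_0 + \tilde v$, where $v_0$ is the \emph{free} (Minkowski) solution with the same data. The localized free piece $\beta(1-\eta)v_0$ is then controlled by Lemma~\ref{2.2} (the free local smoothing estimate $\|\beta e^{it|D|}f\|_{L^2_t H^\gamma}\lesssim\|f\|_{\dot H^\gamma}$, valid for $\gamma\le (n-1)/2$); the commutator forcing $G=[\Delta,\eta]v_0$ is compactly supported and its $L^2_t\dot H^{\gamma+\varepsilon-1}$ norm is again controlled by Lemma~\ref{2.2} applied at regularity $\gamma+\varepsilon$ (this is precisely where the hypothesis $\gamma<(n-1)/2$ in (ii) enters); finally $\tilde v$, having zero data and compactly supported forcing $-G$, falls under part (i). Your proposal never invokes the free local-energy estimate, so it has no mechanism to produce the $L^2_t$ bound in (ii).

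Your treatment of the $L^\infty_t$ halves---energy conservation for the homogeneous part and duality of the $L^2_t$ estimate (in the style of Corollary~\ref{cor1.5}) for the inhomogeneous part---does match the paper's argument.
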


\begin{proof}
First note that the space $H^{\gamma}_B(\Omega)$ is the usual
Dirichlet space with compatibility conditions on boundary of
$\Omega$ satisfied, and we are assuming that the required
compatibility conditions on data are meet throughout the paper(see
for example in \cite{SS}), therefore write $H^{\gamma}(\Omega)$ for
short elsewhere.

i), Note that $f,g,$ and $F$ are supported in a ball, the $L^2_t$
estimate in the case $\gamma=1$ is just \eqref{hypo}, and then by
elliptic regularity of the operator $\Delta_g$,
\begin{eqnarray}\label{1} \norm{\beta u}_{L_t^2H_x^3}+\norm{\beta
\partial_tu}_{L_t^2H_x^{2}}&\less&
\norm{\Delta_g(\beta u)}_{L_t^2H_x^1}+\norm{\beta
u}_{L_t^2H_x^{1}}+\norm{\Delta_g(\beta
\partial_t u)}_{L_t^2L^2_x}+\norm{\beta\partial_t
u}_{L_t^2L^2_x}\nonumber\\
&\less& \norm{\beta\Delta_g u}_{L_t^2H_x^1}+\norm{[\Delta_g,\beta]
u}_{L_t^2H_x^1}+\norm{\beta u}_{L_t^2H_x^{1}}\nonumber\\
&&+\norm{\beta\partial_t\Delta_g
u}_{L_t^2L^2_x}+\norm{[\Delta_g,\beta]
\partial_t u}_{L_t^2L^2_x}+\norm{\beta\partial_t
u}_{L_t^2L^2_x}
\end{eqnarray}
Since $\Delta_g u$ solves the equation with data $(\Delta_g f,
\Delta_g g)$ and forcing term $\Delta_g F$, we get
\begin{eqnarray}
\label{2} \norm{\beta\Delta_g
u}_{L_t^2H_x^1}+\norm{\beta\partial_t\Delta_g
u}_{L_t^2L^2_x}&\less&\norm{\Delta_g f}_{\dot
H^{1+\three}_x}+\norm{\Delta_g g}_{\dot H^{\three}_x}+\norm{\Delta_g
F}_{L^2_t\dot H^{\three}_x}\nonumber\\
&\less&\norm{f}_{\dot H^{3+\three}_x}+\norm{ g}_{\dot
H^{2+\three}_x}+\norm{F}_{L^2_t\dot H^{2+\three}_x}.
\end{eqnarray}

Also, notice that $[\Delta_g,\beta]u=\beta_1\partial_x u+\beta_2 u$,
where $\beta_i\in C_0^\infty,\; i=1,2$ have support belonging to
$\text{supp} (\beta)$. Thus
\begin{eqnarray}
\label{3} \norm{[\Delta_g,\beta]
u}_{L_t^2H^1_x}+\norm{[\Delta_g,\beta]
\partial_t u}_{L_t^2L^2_x}&\less&\norm{\beta_3
u}_{L_t^2H^2_x}+\norm{\beta_3
\partial_t u}_{L_t^2H^1_x}\nonumber\\
&\less&\|\beta_3u\|_{L_t^2H^1}^\theta\|\beta_3u\|_{L_t^2H^3}^{1-\theta}+\norm{\beta_3
\partial_t u}_{L_t^2L^2_x}^{\theta}\norm{\beta_3
\partial_t u}_{L_t^2H^2_x}^{1-\theta},
\end{eqnarray}
where $\beta_3\in C_0^\infty$ has support in $\text{supp}
(\beta_1)\cap \text{supp} (\beta_2)$, and $\theta$ is any real
number in $(0,1)$.

Based on \eqref{1}, \eqref{2} and \eqref{3}, we get that the $L^2_t$
estimate is true for $\gamma=3$, and similarly holds for
$\gamma=5,7,9..$. and moreover for $\gamma\in \Real$ by duality and
interpolation, i.e.
\begin{multline}\label{L_2^t}
\norm{\beta u}_{L_t^2H_B^\gamma([0,S]\times\Omega)}+\norm{\beta \partial_tu}_{L_t^2H_B^{\gamma-1}([0,S]\times\Omega)}\\
\less \norm f_{\dot{H}^{\gamma+\three}(\Omega)}+ \norm
g_{\dot{H}^{\gamma+\three-1}(\Omega)}+ \norm
F_{L_t^2\dot{H}_B^{\gamma+\three-1}([0,S]\times\Omega)}.
\end{multline}
By Duhamel's principle, the inhomogeneous solution $v$ satisfies
$$\norm{\beta v}_{L_t^2H_B^\gamma([0,S]\times\Omega)}+\norm{\beta \partial_tv}_{L_t^2H_B^{\gamma-1}([0,S]\times\Omega)}\less \norm
F_{L_t^1\dot{H}_B^{\gamma+\three-1}([0,S]\times\Omega)},
$$
by duality of the above estimate, energy estimates and elliptic
regularity, we get
\begin{multline}
\label{infty} \norm {u}_{L_t^\infty
\dot{H}_B^\gamma([0,S]\times\Omega)}+\norm {\partial_t
u}_{L_t^\infty \dot{H}_B^{\gamma-1}([0,S]\times\Omega)}
\\\less \norm f_{\dot{H}^{\gamma}(\Omega)}+ \norm
g_{\dot{H}^{\gamma-1}(\Omega)}+ \norm
F_{L_t^2\dot{H}_B^{\gamma+\three-1}([0,S]\times\Omega)}.
\end{multline}
Now \eqref{energyest1} is a result of \eqref{L_2^t} and
\eqref{infty}.

ii), We first handle with the $L_t^2$ bounds. For the homogeneous
solution $v$, we can assume $f=g=0$ for $|x|\leq 3R/2$ by i).
Decompose $v=(1-\eta)v_0+\tilde{v}$, where $\eta\in C_0^\infty(\Rn)$
equals 1 for $|x|<R$ and vanishes for $|x|>3R/2$, and $v_0$ solves
the homogeneous wave equation in $\Rn\times\Real$. It is easy to see
$(1-\eta)v_0$ solves the Cauchy problem for the Minkowski space wave
equation with initial data $((1-\eta)f,(1-\eta)g)$ and forcing term
$G=\Delta\eta v_0+2\nabla\eta\cdot\nabla v_0$; $\tilde{v}$ solves
the wave equation with initial data $(0,0)$ and forcing term $-G$.
Since $G$ is supported in $R<|x|<2R$, we get the $L_t^2$ bounds for
$(1-\eta)v_0$ and $\tilde v$ by Lemma \ref{2.2} and i).

We get the $L^2_t$ bounds for the inhomogeneous solution $w$ follow
from i) since $F$ is still compactly supported.

Similarly to i), the $L_t^\infty$ bounds for $u=v+w$ follow also
from energy estimates , elliptic regularity and duality.
\end{proof}

The last proposition is a result of Proposition \ref{prop2.11} and
\eqref{energyest1}.
\begin{prop}
\label{2.33} Let $u$ solve \eqref{obwaveequation} and assume that
\begin{equation}
\label{dataspt} f(x)=g(x)=F(t,x)=0, \quad {\rm when}\; |x|>2R.
\end{equation}
If $(X,\gamma,\eta,p)$ is almost admissible with $p>2,
\gamma\geq-\frac{n-3}2$, and Hypothesis B holds, then we have
\begin{equation}
\label{2.5} \norm u_{L_t^pX([0,S]\times\Omega)}\less A(S)(\norm
f_{\dot{H}^{\gamma+\three+\eta}}+\norm
g_{\dot{H}^{\gamma+\three+\eta-1}}+\norm
F_{L_t^2\dot{H}^{\gamma+\three+\eta-1}}).
\end{equation}
\end{prop}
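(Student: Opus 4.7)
The plan is to decompose $u = \beta u + (1-\beta)u$ with a smooth cutoff $\beta \in C_0^\infty(\Real^n)$ chosen so that $\beta \equiv 1$ on $\{|x|\le 2R\}$ and on a neighborhood of $\Real^n\setminus\Omega$, handling the far-field piece by reducing to Minkowski space and the near-field piece by iterating the local-in-time estimate \eqref{1.12} on unit time intervals.

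For the far-field piece $(1-\beta)u$: since $\beta\equiv 1$ on the support of the data, extending $(1-\beta)u$ by zero to $\Real^n$ produces a solution of the Minkowski wave equation with vanishing initial data and forcing equal to the spatially compactly supported commutator $-[\Delta,\beta]u$. Applying Proposition \ref{prop2.11} yields
\[
\norm{(1-\beta)u}_{L_t^pX([0,S]\times\Real^n)} \less A(S)\norm{[\Delta,\beta]u}_{L_t^2\dot H^{\gamma-1}(\Real^n)}.
\]
Since $[\Delta,\beta]u = 2\nabla\beta\cdot\nabla u + (\Delta\beta)u$ involves only first-order derivatives of $u$ in a bounded region, I can dominate it by $\norm{\tilde\beta u}_{L_t^2 H^\gamma}$ for a slightly larger cutoff $\tilde\beta$, and then invoke \eqref{energyest1} to control this by the right-hand side of \eqref{2.5} (using $\eta\ge 0$ and compact support of the data to absorb $\dot H^{\gamma+\three}$ into $\dot H^{\gamma+\three+\eta}$). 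The assumed equivalence $\norm{(1-\beta)\cdot}_{X(\Omega)}\approx\norm{(1-\beta)\cdot}_{X(\Real^n)}$ transfers the bound back to $\Omega$.

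For the near-field piece $\beta u$: partition $[0,S]$ into unit intervals $[j,j+1]$. On each one, $\beta u$ satisfies the obstacle wave equation with data $(\beta u(j),\beta\partial_t u(j))$ at time $j$ and forcing $G := F + [\Delta,\beta]u$ (using $\beta F = F$). Writing $\beta u = v_j + w_j$ via Duhamel with $v_j$ the free evolution of the data and $w_j$ the Duhamel contribution, applying a time-translated version of \eqref{1.12} to $v_j$, and combining Minkowski's integral inequality with \eqref{1.12} for $w_j$, I expect to obtain
\[
\norm{\beta u}_{L_t^pX([j,j+1]\times\Omega)} \less \norm{\beta u(j)}_{\tilde H^\gamma_\eta(\Omega)} + \norm{\beta\partial_t u(j)}_{\tilde H^{\gamma-1}_\eta(\Omega)} + \left(\int_j^{j+1}\norm{G(s)}_{\tilde H^{\gamma-1}_\eta(\Omega)}^2\,ds\right)^{1/2}.
\]
Raising to the $p$-th power, summing over $j$, and using $\ell^{p/2}\subset\ell^1$ (valid for $p\ge 2$) on the forcing term produces $\norm{G}_{L_t^2\tilde H^{\gamma-1}_\eta}^p$, which is controlled as in the far-field step.

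The hard part will be bounding the data sums $\sum_j(\norm{\beta u(j)}^p_{\tilde H^\gamma_\eta} + \norm{\beta\partial_t u(j)}^p_{\tilde H^{\gamma-1}_\eta})$. Since $\beta u(j)$ and $\beta\partial_t u(j)$ are supported in the fixed compact set $\mathrm{supp}\,\beta$, the $\tilde H^\gamma_\eta$- and $\tilde H^{\gamma-1}_\eta$-norms there are equivalent to $H^{\gamma+\eta}$- and $H^{\gamma+\eta-1}$-norms. The plan is to invoke \eqref{energyest1} and \eqref{energyest2} at the elevated regularity level $\gamma+\eta$ to obtain matching $L_t^2$ and $L_t^\infty$ bounds on $\norm{\beta u}_{H^{\gamma+\eta}}$ and $\norm{\beta\partial_t u}_{H^{\gamma+\eta-1}}$ with exactly the $\three+\eta$ derivative loss allowed by \eqref{2.5}, interpolate to get $L_t^p$ bounds, and then use a fundamental-theorem-of-calculus averaging over each $[j,j+1]$—carefully pairing the energy-identity time derivatives so the lost regularity remains at level $\three+\eta$ rather than jumping by a full derivative—to convert the continuous $L_t^p$ bound into the discrete $\ell^p_j$ bound. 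Combining the two parts then gives \eqref{2.5}.
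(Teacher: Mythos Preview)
Your far-field argument for $(1-\beta)u$ is exactly the paper's, and your near-field strategy is close in spirit but has a real gap at the ``FTC averaging'' step.

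The issue is the passage from the continuous bound $\|\beta u\|_{L^p_t H^{\gamma+\eta}}+\|\beta\partial_t u\|_{L^p_t H^{\gamma+\eta-1}}\lesssim\text{RHS}$ to the discrete sum $\sum_j\bigl(\|\beta u(j)\|_{H^{\gamma+\eta}}^p+\|\beta\partial_t u(j)\|_{H^{\gamma+\eta-1}}^p\bigr)$. Any FTC argument converting a continuous $L^p_t$ bound on $h(t)=\|\beta u(t)\|_{H^{\gamma+\eta}}$ to an $\ell^p_j$ bound on $h(j)$ must control $h'(t)$, and $\partial_t\|\beta u\|_{H^{\gamma+\eta}}^2=2\,\mathrm{Re}\langle\beta u,\beta\partial_t u\rangle_{H^{\gamma+\eta}}$ forces either $\beta u\in H^{\gamma+\eta+1}$ or $\beta\partial_t u\in H^{\gamma+\eta}$---a full derivative more than \eqref{energyest1} at level $\gamma+\eta$ provides. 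Your phrase ``carefully pairing the energy-identity time derivatives'' signals awareness of this mismatch but does not explain how to avoid it; shifting derivatives in the inner product just moves the extra derivative from one factor to the other.

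The paper sidesteps this entirely by using a \emph{smooth} partition of unity in time: with $\sum_j\varphi(t-j)\equiv 1$ and $v_j=\varphi(t-j)\beta u$, each $v_j$ for $j\ge 1$ has \emph{vanishing} Cauchy data, so there is no data term to sum. The price is a time-commutator in the forcing,
\[
[\partial_t^2,\varphi(t-j)]\beta u=2\varphi'(t-j)\,\beta\partial_t u+\varphi''(t-j)\,\beta u,
\]
but this only needs to lie in $L^2_t H^{\gamma+\eta-1}$ (as forcing, via Duhamel and \eqref{1.12}), which is precisely what \eqref{energyest1} delivers: $\beta\partial_t u\in L^2_t H^{\gamma+\eta-1}$ and $\beta u\in L^2_t H^{\gamma+\eta}\subset L^2_t H^{\gamma+\eta-1}$. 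Then $p>2$ and almost-disjoint supports of the $G_j$ give $\|v\|_{L^p_tX}^2\lesssim\sum_j\|G_j\|_{L^2_tH^{\gamma+\eta-1}}^2$, which closes with exactly the $\three+\eta$ loss. If you want to keep your sharp-interval framework, one salvage is to pick the restart times by the mean value theorem: choose $t_j\in[j,j+1]$ minimizing $\|\beta u(t_j)\|_{H^{\gamma+\eta}}^2+\|\beta\partial_t u(t_j)\|_{H^{\gamma+\eta-1}}^2$, so the discrete $\ell^2_j$ sum is automatically dominated by the $L^2_t$ integral, then interpolate with $L^\infty_t$ to reach $\ell^p_j$; but the smooth-cutoff route is cleaner.
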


\begin{proof}
Fix $\beta\in C^\infty_0(\Rn)$ satisfying $\beta(x)=1$, $|x|\le 3R$
and write
$$u=v+w, \quad \text{where } \, v=\beta u, \, \, w=(1-\beta)u.$$
Then $w$ solves the free wave equation
\begin{equation*}
\begin{cases}
(\partial_t^2-\Delta)w=[\beta,\Delta]u
\\
w|_{t=0}=\partial_tw|_{t=0}=0.
\end{cases}
\end{equation*}
Notice that $[\beta,\Delta]u$ is compactly supported, so an
application of Proposition \ref{prop2.11} shows that
$\|w\|_{L^p_tX}$ is dominated by $A(S)\|\rho u\|_{L^2_t\dot
H^{\gamma}_B}$ if $\rho\in C_0^\infty$ equals one on the support of
$\beta$. Therefore, by \eqref{energyest1}, $\|w\|_{L^p_tX}$ is
dominated by the right hand side of \eqref{2.5} with $\eta=0$.

For $v=\beta u$, we decompose it in time $t$ and write
$v=\sum_{j=-\infty}^\infty \varphi(t-j)v$, where $\varphi \in
C^\infty_0((-1,1))$. Let $v_j=\varphi(t-j)v$ for $j\geq 1$ and
$v_0=v-\sum_{j=1}^\infty v_j$. Then $v_j$ solves
\begin{equation*}
\begin{cases}
(\partial_t^2-\Delta_\g)v_j =G_j
\\
Bv_j(t,x)=0,\quad x\in \partial\Omega
\\
v_j(0,\cd)=\partial_tv_j(0,\cd)=0,
\end{cases}
\end{equation*}
where $G_j=-\varphi(t-j)[\Delta_g,\beta]u +
[\partial^2_t,\varphi(t-j)]\beta u +\varphi(t-j)F$. Also $v_0$
solves the equation with $G_0=-\tilde \varphi [\Delta_g,\beta]u +
[\partial_t^2,\tilde \varphi]\beta u + \tilde \varphi F$ and initial
date $(f,g)$.

Since $G_j$ with $j\geq 0$ vanishes if $t$ is not in $[j-1,j+1]$ or
if $|x|>3R$, by the local Strichartz estimates \eqref{1.12} and
Duhamel, we get for $j=1,2,\dots$,
$$\|v_j\|_{L^p_tX([0,S]\times \Omega)}\less
\int_0^S \|G_j(s,\cd)\|_{\tilde{H}^{\gamma-1}_\eta}\, ds \lesssim
\|G_j\|_{L^2_t H^{\gamma+\eta-1}_B}.$$ Similarly,
$$
\|v_0\|_{L^p_tX(\Real_+\times \Omega)}\lesssim
\|f\|_{\tilde{H}^\gamma_\eta}+\|g\|_{\tilde{H}^{\gamma-1}_\eta}+\|G_0\|_{L^2_tH^{\gamma+\eta-1}_B}\,.
$$
Since $p>2$, by \eqref{energyest1} and the disjoint support of
$G_j$, we have
$$
\begin{aligned}
\|v\|^2_{L^p_tX([0,S]\times \Omega)}&\lesssim \sum_{j=0}^\infty
\|v_j\|^2_{L^p_tX([0,S]\times \Omega)}\\
&\less \sum_{j=1}^\infty \|G_j\|^2_{L^2_tH^{\gamma+\eta-1}_B([0,S]\times \Omega)}+\|v_0\|_{L^p_tX(\Real_+\times \Omega)}\\
&\lesssim \norm f^2_{\dot{H}^{\gamma+\three+\eta}}+\norm
g^2_{\dot{H}^{\gamma+\three+\eta-1}}+\norm
F^2_{L_t^2\dot{H}^{\gamma+\three+\eta-1}}, \\
&\less A^2(S)(\norm f^2_{\dot{H}^{\gamma+\three+\eta}}+\norm
g^2_{\dot{H}^{\gamma+\three+\eta-1}}+\norm
F^2_{L_t^2\dot{H}^{\gamma+\three+\eta-1}}),
\end{aligned}
$$
which finishes the proof of Proposition \ref{2.33}.
\\
\end{proof}

\noindent{\bf Proof of Theorem \ref{mainest2}:} By Proposition
\ref{2.33} we can assume that the initial data for $u$ vanishes when
$|x|<3R/2$.  Then we use a cutoff function $\beta\in
C^\infty_0(\Rn)$ satisfying $\beta(x)=1$, $|x|\le R$, and
$\beta(x)=0$, $|x|>3R/2$, and write
$$
u=u_0-v = (1-\beta)u_0 + ( \beta u_0-v)\,,
$$
where $u_0$ solves the Cauchy problem for the Minkowski space wave
equation. By the free estimate \eqref{1.11}, Proposition
\ref{prop2.11},  Lemma \ref{2.2} and energy estimates,
\begin{equation}
\label{4}
\begin{aligned}
\norm{(1-\beta)u_0}_{L_t^pX([0,S]\times\Rn)}&\less
A(S)(\|f\|_{\dot{H}^\gamma}+\|g\|_{\dot{H}^{\gamma-1}}+\|G\|_{L^2_t\dot{H}^{\gamma-1}_B})\\
&\less A(S)(\|f\|_{\dot{H}^\gamma}+\|g\|_{\dot{H}^{\gamma-1}}),
\end{aligned}
\end{equation}
where $G=\Box_g((1-\beta)u_0)=\Box( (1-\beta)u_0)=\Delta\beta\cd
u_0+2\nabla\beta\cd \nabla u_0$ is supported in $R<|x|<3R/2$.

Now consider $\tilde u= \beta u_0-v$, which has forcing term $-G$
and zero initial data. Again by Proposition \ref{2.33} and Lemma
\ref{2.2},
\begin{equation}
\label{5}
\begin{aligned}
\norm{\beta u_0-v}_{L_t^pX([0,S]\times\Omega)}&\less A(S)\|G\|_{L^2_t\dot{H}^{\gamma+\three+\eta-1}_B}\\
&\less A(S)\norm{\rho u_0}_{L_t^2H^{\gamma+\three+\eta}}\quad (\text{here\ } \rho\ \text{is a } C_0^\infty \ \text{function}.)\\
&\less
A(S)(\|f\|_{\dot{H}^{\gamma+\three+\eta}}+\|g\|_{\dot{H}^{\gamma+\three+\eta-1}}).
\end{aligned}
\end{equation}
Based on \eqref{4} and \eqref{5}, we have the theorem proved.\\
\qed


\newsection{Application 1: Sharp life span bounds for $p<p_c$ when $n=3$}

First let us describe the wave equation we will consider:
\begin{equation}
\label{eqn1} \left\{
\begin{array}{rll}
&(\partial_t^2-\Delta_g)u = F_p(u(t,x))\;{\rm on}\;\;\mathbb{R}_+\times \Omega  \\
&u|_{t=0}= f, \partial_t u|_{t=0}= g,  \\
&(Bu)(t,x)=0,\;{\rm on}\;\;\mathbb{R}_+\times \partial\Omega,\\
\end{array}
\right.
\end{equation}
with $B$ as above, the set $\Omega$ is assumed to be either all of
$\Real^3$, or else $\Omega=\Real^3\backslash \kappa$ where $\kappa$
is a compact subset of $|x|<R$ with smooth boundary. Also we assume
$\kappa$ is nontrapping in the sense that any geodesic restricted to
$|x|<R$ has bounded length.\\
We will assume that the nonlinear term behaves like $|u|^p$ when $u$
is small, and so we assume that
\begin{equation}
\sum_{0\leq j\leq 2} |u|^j|\partial_u^j F_p(u)| \less |u|^p,
\end{equation}
when u is small.

On the basis of the discussion in the first section (Remark
\ref{rem1.1}), we will assume Hypothesis B holds with $\three=0,
S=T$. Now if we set
$$\{Z\}=\{\partial_l, x_j\partial_k-x_k\partial_j: 1\leq l\leq n, 1\leq j<k\leq n\},$$
then we  have the following existence theorem for (\ref{eqn1}).

\begin{thm}
\label{existthm1} Let $n=3$, and fix $\Omega\subset\Real^n$ and the
boundary operator $B$ as above. Assume further that Hypothesis B is
valid with $\three=0$. Then if
$$2<p<p_c=1+\sqrt 2,\;\; \gamma=\frac{1}{2}-\frac{1}{p},
$$
and if
$$T_{\varepsilon',p}=c\varepsilon'^{\frac{p(p-1)}{p^2-2p-1}},
$$
then there exists an $\varepsilon_0>0$ depending on $\Omega$, $B$
and $p$ so that \eqref{eqn1} has an almost global solution in
$[0,T_{\varepsilon'}]\times\Omega$, satisfying $(Z^\alpha
u(t,\cdot),\partial_tZ^\alpha u(t,\cdot))\in
\dot{H}_B^\gamma\times\dot{H}_B^{\gamma-1}, \;|\alpha|\leq 2,\;
t\in[0,T_{\varepsilon'}]$, whenever the initial data satisfies the
boundary conditions of order $2$, and
\begin{equation}
\label{datasmall} \sum_{|\alpha|\leq 2}(\norm{Z^\alpha
f}_{\dot{H}_B^\gamma(\Omega)}+\norm{Z^\alpha
g}_{\dot{H}_B^{\gamma-1}(\Omega)})<\varepsilon'
\end{equation}
with $0<\varepsilon'<\varepsilon_0$.
\end{thm}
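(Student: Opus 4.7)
The overall strategy is the standard Picard iteration, made quantitative by an appropriate finite-time Strichartz estimate. Set $u_{-1}\equiv 0$ and for $k\ge 0$ let $u_k$ solve the linear version of \eqref{eqn1} with data $(f,g)$ and forcing term $F_p(u_{k-1})$. Introduce the iteration norm
$$
\cal{M}_T(u)\;=\;\sum_{|\alpha|\le 2}\Bigl( \|Z^\alpha u\|_{L^p_tL^p_x([0,T]\times\Omega)}+ \|Z^\alpha u\|_{L^\infty_t\dot H^\gamma_B}+\|\partial_tZ^\alpha u\|_{L^\infty_t\dot H^{\gamma-1}_B}\Bigr).
$$
The plan is to prove $\cal M_T(u_k)\le 2C\varepsilon'$ for every $k$ whenever $T\le T_{\varepsilon',p}$, together with a Cauchy estimate for $u_{k+1}-u_k$ in a weaker metric; the almost-global solution is then the limit.

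The key linear input is the finite-time Strichartz estimate
\begin{equation}\label{FTSE}
\|u\|_{L^p_tL^p_x([0,T]\times\Omega)}\;\lesssim\; \|f\|_{\dot H^\gamma_B}+\|g\|_{\dot H^{\gamma-1}_B}+A(T)\|F\|_{L^{r'}_tL^{s'}_x([0,T]\times\Omega)},
\end{equation}
with $A(T)=T^{\sigma}$, $\sigma=\sigma(p)>0$, valid under Hypothesis B' (with $\three=0$) and for a dual pair $(r',s')$ produced by Corollary \ref{cor1.5}. The growth factor cannot be dispensed with because, for $p<3$, the quadruple $(L^p,\gamma,0,p)$ with $\gamma=\tfrac12-\tfrac1p$ lies off the scale-invariant admissible line. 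The Minkowski input \eqref{1.11} with $A(T)=T^{\sigma}$ is obtained by real interpolation between the KSS local-smoothing estimate (which carries a logarithmic loss over $[0,T]$) and the endpoint trace lemma; the intermediate exponent is tuned so that the resulting $\sigma$ equals $(1+2p-p^2)/p$. Plugging this into Corollary \ref{cor1.5} gives \eqref{FTSE} on the exterior domain.

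The nonlinear step is now algebraic. The chain-rule expansion $|Z^\alpha F_p(u)|\lesssim \sum |u|^{p-j}\prod|Z^{\beta_i}u|$ with $\sum|\beta_i|\le|\alpha|\le 2$, combined with H\"older's inequality and a vector-field Sobolev embedding, yields $\|Z^\alpha F_p(u)\|_{L^{r'}_tL^{s'}_x}\lesssim \cal M_T(u)^p$. Applying \eqref{FTSE} to each $Z^\alpha u_{k+1}$ and summing produces
$$
\cal M_T(u_{k+1})\;\le\; C\varepsilon' + C\,A(T)\,\cal M_T(u_k)^p,
$$
and the bootstrap closes precisely when $A(T)\varepsilon'^{\,p-1}\lesssim 1$, i.e. $T^{\sigma}\lesssim \varepsilon'^{-(p-1)}$. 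With the value $\sigma=(1+2p-p^2)/p$ delivered by the interpolation, this becomes $T\lesssim \varepsilon'^{p(p-1)/(p^2-2p-1)}$, matching $T_{\varepsilon',p}$ exactly. A parallel estimate for the differences $u_{k+1}-u_k$, using $|F_p(u)-F_p(v)|\lesssim (|u|+|v|)^{p-1}|u-v|$, yields the Cauchy property in a weaker metric and passing to the limit gives the desired solution.

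The step I expect to be most delicate is the vector-field commutator analysis on $\Omega$: the generators $Z$ neither preserve $\Omega$ nor commute with $\Delta_\g$ or with the boundary operator $B$, so \eqref{FTSE} cannot be applied directly to $Z^\alpha u_{k+1}$. The remedy is the cutoff splitting used already in Proposition \ref{2.33}. Writing $Z^\alpha u=\beta Z^\alpha u+(1-\beta)Z^\alpha u$ with $\beta\in C_0^\infty$ supported near $\partial\Omega$, the far piece $(1-\beta)Z^\alpha u$ sits in the Euclidean region where $[Z,\Delta]=0$ and the free-space form of \eqref{FTSE} applies. The near piece is handled by invoking the local-energy bound \eqref{energyest1} of Hypothesis B'; this absorbs the commutators $[Z^\alpha,\beta]$ and $[Z^\alpha,\Delta_\g]$ at the cost of trading vector-field norms for ordinary Sobolev norms of order up to $|\alpha|+\gamma\le 2+\gamma$, which is exactly what \eqref{datasmall} controls. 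The nontrapping hypothesis on $\kappa$ is essential here because it is what ensures that Hypothesis B' holds with $\three=0$, so that no derivative loss contaminates the iteration and the sharp exponent $\sigma$ above is preserved.
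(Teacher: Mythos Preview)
Your overall architecture---Picard iteration driven by a finite-time Strichartz estimate obtained from KSS/trace interpolation, with the growth factor $A(T)$ dictating the lifespan---matches the paper. But the iteration space you have written down is not the one the interpolation actually produces, and this is a genuine gap rather than a cosmetic slip.

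The endpoint trace lemma is an $L^\infty_tL^\infty_rL^2_\omega$ bound and KSS is a weighted $L^2_tL^2_x=L^2_tL^2_rL^2_\omega$ bound; interpolating them yields a \emph{weighted mixed norm}
\[
\bigl\|\,|x|^{(-\frac12-\gamma)/p}\,e^{it|D|}f\bigr\|_{L^p_tL^p_rL^2_\omega([0,T]\times\Real^3)}
\lesssim T^{(-p+\tfrac1p+2)/p}\|f\|_{\dot H^\gamma},
\]
not an unweighted $L^p_tL^p_x$ bound. In fact the diagonal pair $(p,p)$ is not Strichartz--admissible in $n=3$ for any $p<4$, so your estimate \eqref{FTSE} with $X=L^p_x$ cannot hold even on Minkowski space, regardless of the growth factor. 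The paper accordingly builds the iteration norm out of the weighted piece $\|\,|x|^{(-\frac12-\gamma)/p}\Gamma^\alpha u\|_{L^p_tL^p_rL^2_\omega(|x|>2R)}$ together with a near-obstacle $L^p_tL^{s_\gamma}_x$ piece, and this angular $L^2_\omega$ structure is exactly what makes the nonlinear step work: since $\{Z\}$ contains a basis of tangent vectors to $S^2$, Sobolev embedding on the sphere gives $\|\Gamma^\alpha F_p(u)\|_{L^2_\omega}\lesssim\bigl(\sum_{|\beta|\le 2}\|\Gamma^\beta u\|_{L^2_\omega}\bigr)^p$ pointwise in $(t,r)$, which is how one closes $\|\,|x|^{-\frac12-\gamma}\Gamma^\alpha F_p(u)\|_{L^1_tL^1_rL^2_\omega}\lesssim (A(T)M)^p$.

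A second, related point: the forcing term is not handled through Corollary~\ref{cor1.5} and a dual admissible pair $(r',s')$. Instead one uses Duhamel in $L^1_t\dot H^{\gamma-1}$ together with the dual trace bound $\|\varphi\|_{\dot H^{\gamma-1}}\lesssim\|\,|x|^{-\frac12-\gamma}\varphi\|_{L^1_rL^2_\omega}$ (valid for $\tfrac12<1-\gamma<\tfrac32$), which again lands in the weighted $L^1_rL^2_\omega$ scale. Your commutator discussion near the boundary is on the right track and is indeed where the work lies, but it should be carried out in this weighted angular framework; once you replace $L^p_tL^p_x$ by the correct norm and route the forcing through $L^1_t\dot H^{\gamma-1}$, your bootstrap arithmetic goes through and reproduces the sharp exponent $T_{\varepsilon',p}=c\,\varepsilon'^{\,p(p-1)/(p^2-2p-1)}$.
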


In the case where $\Omega=\Real^3$ and $\Delta_\g=\Delta$ it is
known that $p>p_c$ is necessary for global existence (see John
\cite{John}).  John [17] also established the global existence
theorem for $p>p_c=1+\sqrt 2$. For the local existence result,
Lindblad \cite{Lindblad} handled the case $1<p<1+\sqrt2$ in
$\Real^3$, then Zhou \cite{Zhou} obtained the case $p=1+\sqrt2$. In
their works it
was also shown that the lifespan estimates given are sharp.\\
\indent On the other hand, when the data is spherically symmetrical
and $n=3$, Sogge \cite{So} and Hidano \cite{Hidano} obtained the
sharp local well-posedness theorem for the Minkowski wave equation
respectively by using some radial estimates. It is also shown in
\cite{So} that
the regularity $\gamma=1/2-1/p$ is sharp for radial data.\\
\indent For nontrapping obstacles, Hidano, Metcalfe, Smith, Sogge
and Zhou \cite{KJHCY} dealt with the global existence part (i.e.
$p>p_c$) for \eqref{eqn1} with $n=3,4$.

Here we will use the real interpolation method to get the local
existence theorem for \eqref{eqn1} when the perturbation is
nontrapping. Before handling the obstacle problem we will first give
an alternative proof for the Minkowski space results, which involves
an interpolation between the following two estimates.
\begin{lem}
(A variant of the KSS estimate) For $n\geq 3$, let $u$ solve the
homogeneous wave equation \eqref{inhomowaveequation} ($F=0$) in the
Minkowski space. Then we have
\begin{equation}
\label{est7} \big\|\langle
x\rangle^ae^{it|D|}f\big\|_{L_t^2L_x^2([0,T]\times\mathbb{R}^n)}
\lesssim B(T)\|f\|_{\dot{H}^0},
\end{equation}
where
\begin{equation}
\label{B(T)}
B(T)= \left\{
\begin{array}{rll}
&T^{(1/2+a)},\; \;{\rm if}\;\;-1/2<a\leq 0 \\
&(\log((2+T)))^{1/2}, \;\; {\rm if}\;\;a=-1/2\\
& Constant, \;\; {\rm if}\;\;
 a<-1/2.
\end{array}
\right.
\end{equation}
In particular, for $-1/2<a\leq 0$, we have
\begin{equation}
\label{est1}
\big\||x|^ae^{it|D|}f\big\|_{L_t^2L_x^2([0,T]\times\mathbb{R}^n)}
\lesssim T^{(1/2+a)} \|f\|_{\dot{H}^0}.
\end{equation}
\end{lem}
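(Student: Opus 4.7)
The plan is to reduce to the compactly supported trace estimate (Lemma~\ref{2.2} with $\gamma=0$) and then sum it dyadically in space against the weight $\langle x\rangle^a$, using the unitary $L^2$ bound as a complementary endpoint.

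First I would apply Lemma~\ref{2.2} at $\gamma=0$ with a cutoff $\beta\in C_0^\infty(\Real^n)$ that equals $1$ on $\{1/2\le|x|\le 1\}$ and is supported in $\{1/4<|x|<2\}$, which gives
\begin{equation*}
\int_{-\infty}^{\infty}\|\beta(\cdot)(e^{it|D|}f)(t,\cdot)\|_{L^2(\Rn)}^2\,dt\less \|f\|_{L^2(\Rn)}^2.
\end{equation*}
Since $v_j(t,x):=(e^{it|D|}f)(2^jt,2^jx)$ itself solves the homogeneous wave equation, applying the displayed bound to $v_j$ and changing variables $s=2^jt$, $y=2^jx$ yields the annular local-energy estimate
\begin{equation*}
\int_{-\infty}^{\infty}\|\chi_{A_j}\cdot (e^{it|D|}f)(t,\cdot)\|_{L^2(\Rn)}^2\,dt\less 2^j\|f\|_{L^2(\Rn)}^2
\end{equation*}
on each dyadic shell $A_j=\{2^{j-1}<|x|\le 2^j\}$, with implicit constant independent of $j\in\Z$.

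To prove the main estimate I would split space at radius $T$. On $|x|\ge T$ I use $\langle x\rangle^a\le T^a$ (valid since $a\le 0$) together with $L^2$ unitarity of $e^{it|D|}$ to get
\begin{equation*}
\int_0^T\int_{|x|\ge T}\langle x\rangle^{2a}|u|^2\,dx\,dt\le T^{2a}\cdot T\,\|f\|_{L^2}^2=T^{2a+1}\|f\|_{L^2}^2,
\end{equation*}
while on $\{1\le|x|<T\}$ I sum the annular estimate over $1\le 2^j\le T$ (the $|x|<1$ piece contributes $O(\|f\|_{L^2}^2)$ by Lemma~\ref{2.2} alone):
\begin{equation*}
\int_0^T\int_{1\le|x|<T}\langle x\rangle^{2a}|u|^2\,dx\,dt\less \Bigl(\sum_{1\le 2^j\le T}2^{(2a+1)j}\Bigr)\|f\|_{L^2}^2.
\end{equation*}
The three cases for $B(T)$ now fall out from a routine split of this geometric series: when $a>-1/2$ the sum is dominated by the scale $2^j\sim T$ and produces $T^{2a+1}$; at $a=-1/2$ the exponent vanishes, so every scale $2^j\le T$ contributes a comparable constant and yields the $\log(2+T)$ factor; and when $a<-1/2$ the series converges absolutely to $O(1)$. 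For the $|x|^a$ version on $-1/2<a\le 0$ one also retains the $j<0$ annuli, whose total contribution $\sum_{j<0}2^{(2a+1)j}$ is a convergent geometric series because $2a+1>0$.

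The main obstacle is getting the scaling of the annular bound correct---the single power of $2^j$ on the right-hand side reflects one factor from the time rescaling and $n$ cancelling factors between the $L^2$ norm and the rescaling of the data---after which the three regimes of $B(T)$ reduce to routine geometric-series bookkeeping.
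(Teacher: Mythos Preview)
Your argument is correct and follows essentially the same scheme as the paper: a unit-scale local energy bound, rescaled to dyadic annuli and summed over $|x|<T$, combined with the trivial $L^2$ conservation on $|x|>T$; the three regimes for $B(T)$ then drop out of the geometric series exactly as you say. The only organizational difference is that the paper first writes $u=-i\sum_j\partial_j v_j$ with $\hat v_j=\xi_j|\xi|^{-2}\hat u$ and invokes the KSS estimate $\|v'\|_{L^2_tL^2_x(\{|x|<1\})}\lesssim\|v'(0)\|_{L^2}$ at the $\dot H^1$ level, whereas you bypass the Riesz-transform step by applying Lemma~\ref{2.2} at $\gamma=0$ directly; these are equivalent building blocks and the scaling/summation is identical. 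For the $|x|^a$ version the paper likewise appeals to Lemma~\ref{2.2} plus scaling on $\{|x|<1\}$, exactly matching your convergent tail $\sum_{j<0}2^{(2a+1)j}$.
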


\begin{proof}
 Actually the cases where $a\leq-1/2$ have been well set up in Du, Metcalfe, Sogge, Zhou ~\cite{MSYY}, and
can be adapted to handle the case $-1/2<a\leq0$. Specifically,
considering $u=-i\sum_{j=1}^{n}\partial_jv_j,
v_j=\check{F}[\hat{u}(t,\xi)\frac{\xi_j}{|\xi|^2}]$, the lemma
follows from
$$\big\|\langle x\rangle^a v'\big\|_{L_t^2L_x^2([0,T]\times\mathbb{R}^n)} \lesssim B(T)\|f\|_{\dot{H}^1},\;\;\; a\leq 0.$$
But this is from the following estimate
\begin{equation}
\label{kssest} \|v'(t,x)\|_{L_t^2L_x^2([0,\infty]\times{|x|<1})}
\lesssim \|v'(0,x)\|_{L^2_x},
\end{equation}
and a scaling argument for $|x|<T$, the energy inequality for
$|x|>T$. For the proof of \eqref{kssest}, refer to Keel, Smith and
Sogge \cite{KSS}.

As for \eqref{est1}, we just need to take care of the case where
$|x|<1$, but that is just a direct result of Lemma \ref{2.2} and a
scaling argument for a partition of $\{x: 0<|x|<1\}$. See
\cite{Hidano} for details.
\end{proof}

In what follows, we will employ \eqref{est1} to do the
interpolations for simplicity, while we remark that the weaker
estimate \eqref{est7} can actually lead us to the same existence
theorem as well by the same argument.

The next estimate is a result of the complex interpolation between
\eqref{est1} and the endpoint Trace Lemma.
\begin{prop}
For $n=3$, let $u$ solve the homogeneous wave equation
\eqref{inhomowaveequation} in Minkowski space. Then we have
\begin{equation}
\label{est2}
\big\||x|^{(1+2a)/3}e^{it|D|}f\big\|_{L_t^3L_r^3L_\omega^2([0,T]\times\mathbb{R}^3)}
\less T^{(1+2a)/3}\|f\|_{\dot{B}^{1/6}_{2,3/2}},
\end{equation}
for $-1/2<a\leq 0$.
\end{prop}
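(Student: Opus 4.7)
The plan is to derive \eqref{est2} by complex interpolation between two endpoint estimates for the free propagator $e^{it|D|}$ on $\Real^3$: the KSS-type bound \eqref{est1} at the $(\dot H^0,L^2_tL^2_rL^2_w)$ level (with the $T^{1/2+a}$ factor), and an endpoint trace estimate
\begin{equation*}
\||x|\,e^{it|D|}f\|_{L^\infty_tL^\infty_rL^2_w([0,T]\times\Real^3)}\lesssim \|f\|_{\dot B^{1/2}_{2,1}},
\end{equation*}
with no $T$-factor. This second endpoint is my reading of the ``endpoint Trace Lemma'' in the statement: it comes from combining the scale-invariant sphere trace inequality $\||x|\,h\|_{L^\infty_rL^2_w(\Real^3)}\lesssim \|h\|_{\dot B^{1/2}_{2,1}}$ (a dyadic Littlewood-Paley refinement, which is needed because the naive $\dot H^{1/2}$ trace on concentric spheres fails at the borderline $s=\tfrac12$) with the pointwise-in-$t$ conservation of the Besov norm under $e^{it|D|}$.

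Next I would set up the analytic family
\begin{equation*}
T^z f(t,x) = |x|^{a(1-z)+z}\, e^{it|D|} f(x), \qquad 0 \leq \Re z \leq 1,
\end{equation*}
viewed as taking values in the Hilbert space $L^2_w$. Since $|x|^{i\tau}$ is a unimodular pointwise multiplier, the two endpoint estimates translate at $\Re z = 0$ and $\Re z = 1$ into the operator bounds
\begin{equation*}
\|T^{i\tau} f\|_{L^2_{t,r}(L^2_w)} \lesssim T^{1/2+a}\|f\|_{\dot H^0}, \qquad \|T^{1+i\tau} f\|_{L^\infty_{t,r}(L^2_w)} \lesssim \|f\|_{\dot B^{1/2}_{2,1}}.
\end{equation*}
Complex interpolation of the Besov spaces $\dot B^0_{2,2}=\dot H^0$ and $\dot B^{1/2}_{2,1}$ at $\theta=\tfrac13$ produces $\dot B^{1/6}_{2,3/2}$ on the data side (since $1/q=\tfrac23\cdot\tfrac12+\tfrac13\cdot 1=\tfrac23$, i.e.\ $q=\tfrac32$, while $s=\tfrac16$ and $p=2$), and complex interpolation of the mixed Lebesgue--Hilbert spaces produces $L^3_{t,r}(L^2_w)=L^3_tL^3_rL^2_w$ on the target side. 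The weight exponent becomes $a(1-\tfrac13)+\tfrac13=\tfrac{1+2a}{3}$ and the $T$-factor becomes $T^{(1-\theta)(1/2+a)}=T^{\frac23(\frac12+a)}$, matching \eqref{est2} on the nose.

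The main obstacle is pinning down the correct form of the endpoint trace estimate: at the borderline $s=\tfrac12$ regularity the sphere trace inequality $\dot H^{1/2}(\Real^n)\to L^2(S^{n-1})$ genuinely fails, so the scale-invariant endpoint requires the smaller Besov space $\dot B^{1/2}_{2,1}$ in its place; this is established by Littlewood-Paley decomposition, estimating each dyadic frequency piece by a non-endpoint trace and summing in $\ell^1$. Once this refined Trace Lemma is in hand, the analytic interpolation is routine, and the only remaining care is tracking the $T$-dependence of \eqref{est1} through the family $T^z$ so that the correct factor $T^{\frac23(\frac12+a)}$ survives to the conclusion.
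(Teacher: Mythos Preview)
Your proposal is correct and follows essentially the same route as the paper: the paper also cites the endpoint Trace Lemma $\||x|^{\frac{n-1}{2}}e^{it|D|}f\|_{L^\infty_tL^\infty_rL^2_w}\lesssim\|f\|_{\dot B^{1/2}_{2,1}}$ (attributed to Wang--Fang) and performs complex interpolation against \eqref{est1} at $\theta=\tfrac13$, using $(\dot B^0_{2,2},\dot B^{1/2}_{2,1})_{[\theta]}=\dot B^{1/6}_{2,3/2}$ on the data side. Your explicit setup of the analytic family $T^z$ and the tracking of the weight and $T$-exponent just make the interpolation step more concrete than the paper's one-line invocation.
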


Here, and in what follows, we are using the mixed-norm notation with
respect to the volume element
$$
\|h\|_{L^q_rL^p_\omega}=\Bigl(\, \int_0^\infty \, \Bigl(\,
\int_{S^{n-1}}|h(r\omega)|^p\, d\sigma(\omega)\, \Bigr)^{q/p} \,
r^{n-1}dr\, \Bigr)^{1/q}
$$
for finite exponents and
$$
\|h\|_{L^\infty_rL^p_\omega}=\sup_{r>0} \Bigl(\,
\int_{S^{n-1}}|h(r\omega)|^p\, d\sigma(\omega)\, \Bigr)^{1/p}.
$$

Also, the homogeneous Besove space $\dot B_{p,q}^s$ is defined as
$$\|f\|_{\dot B_{p,q}^s}=\|2^{js}P_jf\|_{l^q_j(j\in\Z)L^p_x}, ~~\text{where}~ f=\sum_{j}P_jf ~\text{is the Littlewood-Paley decomposition.}$$

\begin{proof}
Recall that we have the endpoint Trace Lemma (see \cite{CF}):
$$
\big\||x|^{\frac{n-1}2}e^{it|D|}f\big\|_{L_t^\infty L_r^\infty
L_\omega^2([0,T]\times\mathbb{R}^n)} \less
\|f\|_{\dot{B}^{1/2}_{2,1}}.
$$
Now if we use the complex interpolation between this estimate and
\eqref{est1} for $n=3$, and set $\theta=1/3$. Noting $\dot
B^0_{2,2}=\dot H^0$ and using the fact of
$(\dot{B}^0_{2,2},\dot{B}^{1/2}_{2,1})_{[\theta]}=\dot{B}^{1/6}_{2,3/2}$
 for $\theta=1/3$(see Section 6.4 in \cite{JJ}), we get the desired estimate
\eqref{est2} for $-1/2<a\leq 0$.
\end{proof}

Next we will cite some notations and results in \cite{JJ} and
\cite{TH}. Let $A_0, A_1$ be Banach spaces, define the real
interpolation space $(A_0, A_1)_{\theta,q}$ for $0<\theta<1$ and $
1\leq q\leq \infty$ via the norm:
$$
\norm{a}_{(A_0, A_1)_{\theta,q}}=\norm{a}_{(A_0,
A_1)_{\theta,q;K}}=\Big(\int_0^\infty(t^{-\theta}K(t,a))^qdt/t\Big)^{1/q},
$$
where
$$
K(t,a)=\inf_{a=a_0+a_1}(\norm{a_0}_{A_0}+\norm{a_1}_{A_1}).
$$
Now if we let
$$\begin{aligned}
A_0&=\dot{B}^0_{2,2}, \;\;A_1=\dot{B}^{1/6}_{2,3/2},\\
B_0&=L_{t,r}^2L_\omega^2([0,T]\times[0,\infty)\times S^2, r^{2+2a}dtdrdw),\\
B_1&=L_{t,r}^3L_\omega^2([0,T]\times[0,\infty)\times S^2,
r^{3+2a}dtdrdw),
\end{aligned}
$$
then by \eqref{est1} and \eqref{est2}, we have
\begin{equation}
\label{est4} Tf=e^{it|D|}f: \bar{A}\rightarrow \bar{B}, \quad
\text{where}\ \bar A=(A_0,A_1), \bar B=(B_0,B_1),
\end{equation}
and
\begin{equation}
\label{est5} M_0\less T^{1/2+a},\; M_1\less T^{2/3(1/2+a)}, \quad
\text{where}\ M_i=\norm T_{A_j,B_j},\ j=0,1.
\end{equation}

Now we can state the main weighted Strichartz estimates as follows:
\begin{prop}
\label{3.4} For $n=3$, let $u$ solves the homogeneous wave equation
\eqref{inhomowaveequation} in Minkowski space ($F=0$). Then we have
\begin{equation}
\label{est3}
\big\||x|^{(-1/2-\gamma)/p}u\big\|_{L_t^pL_r^pL_\omega^2([0,T]\times\mathbb{R}^3)}
\less T^{(-p+1/p+2)/p}(\|f\|_{\dot{H}^{\gamma}}+\norm g_{\dot
H^{\gamma-1}}),
\end{equation}
where $\gamma=1/2-1/p$ and $2<p<1+\sqrt 2$.
\end{prop}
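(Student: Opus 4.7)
The plan is to apply the real interpolation theorem to the operator $Tf = e^{it|D|}f$ for which the endpoint bounds \eqref{est4}--\eqref{est5} are already established, choosing the interpolation parameter $\theta$ and the weight $a$ simultaneously so that the source matches $\dot H^\gamma$, the target matches the weighted mixed-norm space on the left of \eqref{est3}, and the interpolated operator norm reproduces the time factor on the right.

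First I would reduce the Cauchy problem to half-wave propagators: write $u = e^{it|D|}f_+ + e^{-it|D|}f_-$ with $f_\pm = \frac12(f \mp i|D|^{-1}g)$, so that $\|f_\pm\|_{\dot H^\gamma} \lesssim \|f\|_{\dot H^\gamma} + \|g\|_{\dot H^{\gamma-1}}$, reducing matters to $e^{it|D|}f$. By the real interpolation theorem, $T:(A_0,A_1)_{\theta,q}\to(B_0,B_1)_{\theta,q}$ with operator norm bounded by $M_0^{1-\theta}M_1^\theta \lesssim T^{(\frac12+a)(1-\theta/3)}$ for every $\theta\in(0,1)$ and $q\in[1,\infty]$. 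The standard formula $(\dot B^{s_0}_{2,q_0},\dot B^{s_1}_{2,q_1})_{\theta,2}=\dot B^{(1-\theta)s_0+\theta s_1}_{2,2}$ identifies the source, so choosing $\theta = 6\gamma = 3 - 6/p$ gives $(A_0,A_1)_{\theta,2} = \dot H^\gamma$; this $\theta$ lies in $(0, 9-6\sqrt 2)\subset(0,1)$ throughout $2<p<1+\sqrt 2$, so the interpolation is non-degenerate.

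For the target I would use the change of variables $\tilde u(t,r,\omega)=r\,u(t,r\omega)$, natural in $n=3$ since $r^{n-1}=r\cdot r$. Both $B_0$ and $B_1$ then convert into weighted vector-valued Lebesgue spaces $L^{p_i}(r^{2a}\,dt\,dr;\,L^2_\omega)$ with a common underlying measure, so real interpolation produces the Lorentz space $L^{p_\theta,2}(r^{2a}\,dt\,dr;\,L^2_\omega)$ with $1/p_\theta=(1-\theta)/2+\theta/3=1/p$. Since $p>2$, the embedding $L^{p,2}\hookrightarrow L^p$ then yields a Lebesgue bound on $\tilde u$, equivalent after undoing the substitution to a weighted $L^p$-estimate on $u$ with measure weight $r^{p+2a}$. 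Matching this to the weight demanded by the left-hand side of \eqref{est3}, namely that coming from $|x|^{(1-p)/p^2}$, forces $a=(1+p-p^2)/(2p)$, and a direct check shows $a\in(-\tfrac12,-\tfrac14]$ for $p\in(2,1+\sqrt 2]$, inside the admissible range of \eqref{est1}.

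Finally, substituting the chosen $\theta$ and $a$ into $(\tfrac12+a)(1-\theta/3)$ yields $(1+2p-p^2)/p^2 = (-p+1/p+2)/p$, reproducing the time factor in \eqref{est3}. The main obstacle I anticipate is the bookkeeping around the weighted mixed-norm spaces: the two independent constraints (matching the Sobolev index $\gamma$ of the source and the target weight on the output) are tied to a single pair $(\theta,a)$ and must simultaneously sit inside the admissible ranges, and one has to justify the real-interpolation identification for vector-valued weighted $L^{p_i}$ with a common base measure after the $\tilde u=ru$ substitution, together with the Lorentz-to-Lebesgue embedding step.
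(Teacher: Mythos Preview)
Your proposal is correct and follows essentially the same route as the paper: real interpolation between \eqref{est1} and \eqref{est2} with the same choices $\theta=3-6/p$ and $a=(1+1/p-p)/2$, identifying the source via the Besov interpolation formula and the target via weighted $L^p$-interpolation. Your substitution $\tilde u=ru$ together with the Lorentz embedding $L^{p,2}\hookrightarrow L^p$ is just a concrete realization of what the paper does by citing Triebel's bilinear weighted interpolation and the abstract embedding $\bar B_{\theta,2}\subset\bar B_{\theta,p}$ for $p\ge 2$.
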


\begin{proof}
The result when the data is radial was shown in \cite{Hidano}. Here
we will use a different method to handle with the nonradial case.

Since $K_{\theta,q}$ is an exact interpolation functor of exponent
$\theta$ (Theorem3.1.2 in \cite{JJ}), from \eqref{est4} and
\eqref{est5} we get
\begin{equation}
\label{est6}
\begin{aligned}
\norm{Tf}_{\bar B_{\theta,2}}&\leq M_0^{1-\theta}M_1^{\theta}\norm f_{\bar A_{\theta,2}}\\
&\less T^{(1-\frac13\theta)(\frac12+a)}\norm f_{\bar A_{\theta,2}},
\end{aligned}
\end{equation}
if $-1/2<a\leq 0$. To proceed, we note that from Theorem 6.4.5 in
\cite{JJ} we have
$$(B_{pq_0}^{s_0}, B_{pq_1}^{s_1})_{\theta,r}=B_{pr}^{s^*},\ \text{if}\ s_0\neq s_1,~ 0<\theta<1,~ r,q_0,q_1\geq 1 ~\text{and}~ s^*=(1-\theta)s_0+\theta s_1.$$
Set $a=(-p+1/p+1)/2$ and $\theta=3-6/p$, then we have $0<\theta<1$
and $-1/2<a\leq 0$ since $2<p<1+\sqrt 2$. Thus we see
\begin{equation}
\label{RHS} RHS\  of\  \eqref{est6}\less T^{(-p+1/p+2)/p}\norm
f_{\dot B^{1/2-1/p}_{2,2}}=T^{(-p+1/p+2)/p}\norm f_{\dot
H^{1/2-1/p}}.
\end{equation}
On the other hand, we can use the fact (Theorem 3.4.1(b) in
\cite{JJ})
$$\bar A_{\theta,q}\subset \bar A_{\theta, r},\quad \text{if}\ q\leq r,$$
and bilinear weighted interpolation (Section 1.18.5 in \cite{TH})
$$\Big(\big(L_{t,r}^{p_0}L^2_\omega,w_0(r)dtdrdw\big),\big(L_{t,r}^{p_1}L^2_\omega,w_1(r)dtdrdw\big)\Big)_{\theta,p}=\big(L_{t,r}^{p}L^2_\omega,w(r)dtdrdw\big),$$
if $1/p=(1-\theta)/{p_0}+\theta /{p_1},\
w(r)=w_0^{p(1-\theta)/{p_0}}w_1^{{p\theta}/{p_1}}$.

Since $p>2$, we also have
\begin{equation}
\begin{aligned}
\label{LHS}
LHS\ of\ \eqref{est6}\gtrsim \norm{Tf}_{\bar B_{\theta,p}}&=\norm{Tf}_{(L_{t,r}^{p}L^2_\omega,r^{1+1/p}dtdrdw)}\\
&=\big\||x|^{\frac{-1+1/p}p}Tf\big\|_{L_t^pL_r^pL_w^2([0,T]\times\mathbb{R}^n)}.
\end{aligned}
\end{equation}
Now \eqref{est3} is just a result of \eqref{RHS} and \eqref{LHS}.
\end{proof}

As a result, by the arguments to follow, \eqref{est3} is strong
enough to show the local existence of solutions as described in
Theorem \ref{existthm1} in the Minkowski space case.

To prove the obstacle version of this result, we define
$X=X_{\gamma,p}(\Real^n)$ to be the space with norm defined by
\begin{equation}
\label{defineX} \norm h_{X_{\gamma,p}}= \norm
h_{L^{s_\gamma}(|x|<2R)}+(A(T))^{-1}\norm
{|x|^{{-{1}/{2}-\gamma}/{p}}h}_{L_r^pL_\omega^2(|x|>2R)},
\end{equation}
with $A(T)=T^{\frac{-p+\frac{1}{p}+2}{p}}$ and
$s_\gamma=\frac{2n}{n-2\gamma}$.

Using the space $X$ defined just now, we can prove the following
estimate provided $\gamma={1}/{2}-{1}/{p}$ and $p\geq 2$:
\begin{equation}
\label{3.11} \|u\|_{L_t^pX([0,T]\times \mathbb{R}^3)} \lesssim
\|f\|_{\dot{H}^\gamma}+\|g\|_{\dot{H}^{\gamma-1}},
\end{equation}
\noindent when $u$ solves $\Box u=0$ with initial data $(f,g)$.

Indeed, the contribution of the second part of the norm in
\eqref{defineX} is controlled by \eqref{est3}, and the contribution
of the first term is due to Sobolev estimates and an interpolation
between $L^2_t$ and $L^\infty_t$ in \eqref{energyest2}(Note that
$\three=0$ in our case).

Furthermore, by finite propagation speed of the wave equation,
Sobolev estimates and interpolation between \eqref{energyest2}, we
have the local estimate for solutions of \eqref{obwaveequation} with
$F=0$:
\begin{equation}
\label{3.12} \|u\|_{L_t^pX([0,1]\times \Omega)} \lesssim
(\|f\|_{\dot{H}^\gamma}+\|g\|_{\dot{H}^{\gamma-1}}),
\end{equation}
where $p\geq 2$.

From \eqref{3.11} and \eqref{3.12}, we see that $(X,\gamma,0,p)$ is
admissible. By Theorem \ref{mainest2}, we therefore obtain the
following Proposition:
\begin{prop}
\label{prop3.5}
 For $n=3$, let $u$ be a solution of \eqref{obwaveequation} with
$F=0$, and let $\Omega$ be such a domain as described in the
beginning of this section. Moreover, assume that
\begin{equation}
\label{3.13}\gamma=\frac12-\frac1p, \ p\in(2,1+\sqrt2).
\end{equation}
Then
\begin{equation}
\label{prop3.6} \|u\|_{L_t^pX([0,T]\times \Omega)} \lesssim
\|f\|_{\dot{H}^\gamma}+\|g\|_{\dot{H}^{\gamma-1}}.
\end{equation}
\end{prop}
From the above proposition we get the following useful corollary.
\begin{cor}
For $n=3$, let $u$ be a solution of \eqref{obwaveequation}, and let
$\Omega$ be such a domain as described in the beginning of this
section. Moreover, assume the condition \eqref{3.13}. Then
\begin{eqnarray}
\label{cor3.6} \norm
{u}_{L_t^pL_x^{s_\gamma}([0,T]\times|x|<2R)}+(A(T))^{-1}\norm
{|x|^{(-{1}/{2}-\gamma)/{p}}u}_{L_t^pL_r^pL_\omega^2(|x|>2R)}\nonumber\\
&\hspace{-2.5in} \less
\|f\|_{\dot{H}^\gamma}+\|g\|_{\dot{H}^{\gamma-1}}+\norm
F_{L_t^1L_x^{s_{1-\gamma}'}([0,T]\times\{|x|<2R\})}\nonumber\\
&\hspace{-2in}+\norm{|x|^{-{1}/{2}-\gamma}F}_{L_t^1L_r^1L_\omega^2([0,T]\times
\{|x>2R|\})}.
\end{eqnarray}
\end{cor}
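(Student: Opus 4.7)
The plan is to reduce the inhomogeneous Corollary to the homogeneous Proposition \ref{prop3.6} by a Duhamel argument combined with Minkowski's inequality in time, and then to convert the $\dot{H}^{\gamma-1}$ norm that arises on the forcing into the spatially split norm on the right-hand side via duality of two Sobolev-type embeddings.

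First, write $u=u_0+w$, where $u_0$ solves \eqref{obwaveequation} with $F$ replaced by $0$ and $w$ solves the same equation with zero initial data. Proposition \ref{prop3.6} bounds $\|u_0\|_{L_t^pX([0,T]\times\Omega)}$ by $\|f\|_{\dot{H}^{\gamma}}+\|g\|_{\dot{H}^{\gamma-1}}$, which already accounts for both parts of the left-hand side. For $w(t,\cdot)=\int_0^t \sin((t-s)|D|)|D|^{-1}F(s,\cdot)\,ds$, Minkowski's inequality in $t$ (no Christ--Kiselev argument is required, since the temporal exponent on $F$ is $1$), combined with Proposition \ref{prop3.6} applied to the homogeneous problem with data $(0,F(s,\cdot))$ initialized at time $s$, gives
\begin{equation*}
\|w\|_{L_t^pX([0,T]\times\Omega)}\lesssim \int_0^T\|F(s,\cdot)\|_{\dot{H}_B^{\gamma-1}(\Omega)}\,ds.
\end{equation*}

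It therefore suffices to establish the pointwise-in-time bound
\begin{equation*}
\|F\|_{\dot{H}^{\gamma-1}(\Omega)}\lesssim \|F\|_{L^{s_{1-\gamma}'}(|x|<2R)} + \bigl\|\,|x|^{-\tfrac12-\gamma}F\bigr\|_{L_r^1L_\omega^2(|x|>2R)}.
\end{equation*}
Split $F=\chi F+(1-\chi)F$ with $\chi\in C_0^\infty$ equal to $1$ on $\{|x|\le 3R/2\}$ and supported in $\{|x|<2R\}$. The near part is handled by dualizing the Sobolev embedding $\dot{H}^{1-\gamma}(\Omega)\hookrightarrow L^{s_{1-\gamma}}(\Omega)$, which is valid since $1-\gamma\in(1/2,1)\subset(0,n/2)$ for our range of $\gamma$; this gives $\|\chi F\|_{\dot{H}^{\gamma-1}}\lesssim \|F\|_{L^{s_{1-\gamma}'}(|x|<2R)}$. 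The far part, supported in the Euclidean region where $\Delta_\g=\Delta$, is handled by dualizing the trace-type inequality $\|\,|x|^{\tfrac12+\gamma}g\|_{L_r^\infty L_\omega^2}\lesssim \|g\|_{\dot{H}^{1-\gamma}}$ (available since $1-\gamma\in(1/2,n/2)$, i.e., the same range that underlies \eqref{est2}): pairing $(1-\chi)F$ against a test function, using Cauchy--Schwarz on each sphere, and noting that the volume factor $r^{n-1}=r^2$ splits as $r^{3/2-\gamma}\cdot r^{1/2+\gamma}$ delivers the weighted $L_r^1L_\omega^2$ bound. Integrating the resulting pointwise estimate in $s$ finishes the argument.

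The only mildly delicate computation is the weight algebra in the dual trace step, where the powers $r^{\pm(1/2+\gamma)}$ must conspire with the Euclidean volume element $r^{n-1}dr$ (here $n=3$) to reproduce exactly the weighted $L_r^1L_\omega^2$ norm in the statement. Once that arithmetic is verified, the rest of the proof is a routine composition of Proposition \ref{prop3.6}, Minkowski, and dual Sobolev embedding, and the full range $\gamma=\tfrac12-\tfrac1p$, $p\in(2,1+\sqrt{2})$ is covered uniformly.
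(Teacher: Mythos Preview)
Your proposal is correct and follows essentially the same route as the paper: Duhamel's principle reduces matters to the homogeneous estimate of Proposition~\ref{prop3.6} plus a pointwise-in-time bound $\|F\|_{\dot H^{\gamma-1}}\lesssim \|F\|_{L^{s'_{1-\gamma}}(|x|<2R)}+\|\,|x|^{-\frac12-\gamma}F\|_{L^1_rL^2_\omega(|x|>2R)}$, which the paper simply quotes from \cite{KJHCY} (see also \eqref{3.16}) while you supply the near/far splitting and the duality argument explicitly. The content is the same; your write-up just unpacks the cited inequality.
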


\begin{proof}
This is an immediate consequence of Duhamel's principle, Sobolev
estimates and the following estimate (originated in \cite{LiZhou},
see also (3.7)in \cite{KJHCY}):
$$\norm \varphi_{\dot{H}^{\gamma-1}}\less \norm{|x|^{-{n}/{2}+1-\gamma}\varphi}_{L_r^1L_\omega^2}, \quad {\rm if}\; \frac{1}{2}
<1-\gamma<\frac{n}{2}.$$ Here the condition ${1}/{2}
<1-\gamma<{n}/{2}$ is satisfied owing to \eqref{3.13}.
\end{proof}

If we set $\Gamma=\{\partial_t,Z\}$, then we can easily adapt such
an argument as in \cite{KJHCY} (see page 15-17) to get the following
higher order estimates of \eqref{cor3.6} and \eqref{energyest2}:
\begin{equation}
\label{estimate1}
\begin{aligned}
\sum_{|\alpha|\leq 2}(\norm {\Gamma^\alpha
u}_{L_t^pL_x^{s_\gamma}([0,T]\times\{|x|<2R\})}+A(T)^{-1}\norm
{|x|^{{-{1}/{2}-\gamma}/{p}}\Gamma^\alpha u}_{L_t^pL_r^pL_\omega^2([0,T]\times\{|x|>2R\})})\\
&\hspace{-2.5in} \less
\sum_{|\alpha|\leq 2}(\|Z^\alpha
f\|_{\dot{H}^\gamma}+\|Z^\alpha
g\|_{\dot{H}^{\gamma-1}})+\sum_{|\alpha|\leq 2}(\norm
{\Gamma^\alpha F}_{L_t^1L_x^{s_{1-\gamma}'}([0,T]\times\{|x|<2R\})}\\
&\hspace{-2in}+\norm{|x|^{-{1}/{2}-\gamma}\Gamma^\alpha
F}_{L_t^1L_r^1L_\omega^2([0,T]\times \{|x>2R|\})}).
\end{aligned}
\end{equation}

\begin{equation}
\label{estimate2}
\begin{aligned}
\sum_{|\alpha|\leq 2}(\norm {\Gamma^\alpha u}_{L_t^\infty
\dot{H}_B^\gamma([0,T]\times\Omega)}+\norm
{\partial_t \Gamma^\alpha u}_{L_t^\infty \dot{H}_B^{\gamma-1}([0,T]\times \Omega)})\\
&\hspace{-2.5in} \less \sum_{|\alpha|\leq 2}(\|Z^\alpha
f\|_{\dot{H}^\gamma}+\|Z^\alpha
g\|_{\dot{H}^{\gamma-1}})+\sum_{|\alpha|\leq 2}(\norm
{\Gamma^\alpha F}_{L_t^1L_x^{s_{1-\gamma}'}([0,T]\times\{|x|<2R\})}\\
&\hspace{-2in}+\norm{|x|^{-{1}/{2}-\gamma}\Gamma^\alpha
F}_{L_t^1L_r^1L_\omega^2([0,T]\times \{|x>2R|\})}).
\end{aligned}
\end{equation}

Now we set
$$
M_k(T)=\sum_{|\alpha|\leq 2}\Big(\norm {\Gamma^\alpha
u_k}_{L_t^pL_x^{s_\gamma}([0,T]\times\{|x|<2R\})}+A(T)^{-1}\norm
{|x|^{{-{1}/{2}-\gamma}/{p}}\Gamma^\alpha
u_k}_{L_t^pL_r^pL_\omega^2([0,T]\times\{|x|>2R\})}\Big),
$$
where $u_k, k\geq 0$ is the solution of
\begin{equation}
\label{eqn3} \left\{
\begin{array}{rll}
&(\partial_t^2-\Delta_g)u_k = F_p(u_{k-1}(t,x))\;{\rm on}\;\;\mathbb{R}_+\times \Omega  \\
&u_k|_{t=0}= f,  \\
&\partial_t u_k|_{t=0}= g,  \\
&(Bu_k)(t,x)=0,\;{\rm on}\;\;\mathbb{R}_+\times \partial\Omega.\\
\end{array}
\right.\\
\end{equation}

By the same iteration argument as followed in Section 4, we obtain
Theorem
~\ref{existthm1}.\\

\textbf{Note.} If we use the KSS estimate for $a=-1/2$ instead of
$-1/2<a\leq 0$, and the same complex interpolation method and the
same real interpolation method as above, we will get Proposition
\ref{3.4} with $p=1+\sqrt 2, a=-1/2$ and
$A(T)=(log(2+T))^{{1}/{p}}$. Furthermore we get the
local-wellposedness for the critical power $p=1+\sqrt{2}$ with
$T_\varepsilon=exp(C\varepsilon^{-(p-1)})$, but this life span is
not optimal (The optimal one should be
$T_\varepsilon=\text{exp}(C\varepsilon^{-p(p-1)}))$.


\newsection{Application 2: Strauss conjecture on semilinear wave equations with finitely many obstacles}

We will consider wave equations of the form
\begin{equation}\label{obeq2}
\begin{cases}
(\partial_t^2-\Delta_\g)u(t,x)=F_p\bigl(u(t,x)\bigr),
\quad (t,x)\in \Real_+\times \Omega
\\
Bu=0, \quad \text{on } \, \Real_+\times \partial\Omega
\\
u(0,x)=f(x), \quad \partial_t u(0,x)=g(x), \quad x\in \Omega,
\end{cases}
\end{equation}
with $B$ described as in the first section.
$\Omega=\Real^n\backslash \bigcup_{i=1}^m\kappa_i$ where $\kappa_i
(i=1,2,\cd,m )$ are disjoint compact convex subsets of $|x|<R$ with
smooth boundary. We will assume that the nonlinear term behaves like
$|u|^p$ when $u$ is small, and so we assume that
\begin{equation}\label{1.4}
\sum_{0\le j\le 2} |u|^j\, \bigl|\, \partial^j_u F_p(u)\, \bigr| \,
\lesssim \, |u|^p,
\end{equation}
when $u$ is small.

Ikawa \cite{Ikawa} managed to show that solutions of \eqref{obeq2}
with $n=3, ~\Delta_g=\Delta$, $B=I$, and $F_p(u)=0$ have exponential
decay estimates with a loss of 2 derivatives of data. To assure
this, we need some technical assumptions on the obstacles (see page
3-4 in \cite{Ikawa}), which we will assume are satisfied here. Now,
interpolating between this estimate and the energy estimate we get
an estimate of the form:
$$\norm {u'(t,x)}_{L^2_x(|x|<1)}\less e^{-ct}\norm {u'(0,x)}_{\dot H^\three(|x|<1)},
\quad \text{for any positive number}\ \three.
$$
This motivates us to study \eqref{obeq2} under Hypothesis B.

In the next theorem we are abusing Hypothesis B a little by assuming
it is true for $n=4$. Actually there has been no polynomially local
energy decay set up for even dimensions when there are trapped rays,
which could be expected though. And Burq did show that local energy
decays at least logarithmically with some loss in
derivatives(\cite{Burq1}).
\begin{thm}
\label{existthm}  Let $n=3$ or $4$, and fix $\Omega\subset \Rn$ and
boundary operator $B$ as above.  Assume further that Hypothesis B is
valid for an arbitrarily small $\three>0$.

Let $p=p_c$ be the positive root of
\begin{equation}\label{1.5}
(n-1)p^2-(n+1)p-2=0.
\end{equation}
If
\begin{equation}\label{gamma}
p_c<p< (n+3)/(n-1),\ \gamma=\tfrac{n}2-\tfrac2{p-1},
\end{equation}
then there exists a $\varepsilon_0>0$ depending on $\Omega, B$ and
$p$ and $\three$  so that \eqref{obeq2} has a global solution
satisfying $(Z^\alpha u(t,\cd), \partial_t Z^\alpha u(t,\cd))\in
\dot H^\gamma_B\times \dot H^{\gamma-1}_B$, $|\alpha|\le 2$, $t\in
\Real_+$, whenever the initial data satisfies the boundary
conditions of order $2$, and
\begin{equation}\label{databd}
\sum_{|\alpha|\le 2}\Bigl(\, \|Z^\alpha f\|_{\tilde H_{2\three}^\gamma(\Omega)}
+\|Z^\alpha g\|_{\tilde H_{2\three}^{\gamma-1}(\Omega)}\, \Bigr)<\varepsilon'
\end{equation}
with $0<\varepsilon'<\varepsilon_0$.

On the other hand, if
\begin{equation}
\label{gamma2} n=3, \;\gamma=\frac12-\frac1p,\; p\in(2,1+\sqrt2)
\end{equation}
and
\begin{equation}
T_{\three'}= c\three'^{\frac{p(p-1)}{p^2-2p-1}},
\end{equation}
then there exists a unique solution in $[0,T_{\three'})\times\Omega$
such that $(Z^\alpha u(t,\cd), \partial_t Z^\alpha u(t,\cd))\in\dot
H^\gamma_B\times \dot H^{\gamma-1}_B$ under the condition
\eqref{databd}.
\end{thm}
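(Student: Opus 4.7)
The plan is to handle both parts of the theorem by Picard iteration, using Theorem \ref{mainest2} (for the first claim, with $n=3,4$) and the weighted estimates from Section 3 (for the second, with $n=3$). In each case the key point is to identify an almost admissible quadruple $(X,\gamma,\eta,p)$ whose $X$-norm combines a Sobolev norm on $|x|<2R$ with a weighted mixed radial norm on $|x|\ge 2R$, and whose dual norm is well-suited to closing the iteration on $F_p(u)\approx|u|^p$.

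For the global existence claim with $p_c<p<(n+3)/(n-1)$, I take $\gamma=\frac n2-\frac2{p-1}$ as dictated by scaling, and define
\[
\norm{h}_{X_{\gamma,p}}=\norm{h}_{L^{s_\gamma}(|x|<2R)}+\norm{|x|^{(n-1)(\frac12-\frac1p)-\frac1p}h}_{L_r^pL_\omega^2(|x|>2R)},\qquad s_\gamma=\tfrac{2n}{n-2\gamma}.
\]
In Minkowski space the homogeneous estimate \eqref{1.11} for $(X_{\gamma,p},\gamma,0,p)$ is the standard Strauss-type weighted Strichartz estimate; the local estimate \eqref{1.12} follows from Sobolev embedding together with \eqref{energyest2} and finite propagation speed (allowing an $\eta$-loss to absorb geometry). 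One checks $p>2$ and $\gamma\in[-\frac{n-3}2,\frac{n-1}2)$ are satisfied for the given range. Theorem \ref{mainest2} together with its dual form (Corollary \ref{cor1.5}) then gives the obstacle-version weighted Strichartz estimate with a $2\three$ derivative loss on the data. Combining this with a KSS-type estimate on the annulus $R<|x|<2R$ (replacing \eqref{estimate1}, \eqref{estimate2} in the present setting) and commuting through the vector fields $\Gamma^\alpha=\{\partial_t,Z\}^\alpha$, $|\alpha|\le 2$ — using that $[\Box,Z]=0$ in the Minkowski region and elliptic regularity together with the cutoff trick from the proof of Theorem \ref{mainest2} to handle spatial derivatives near $\partial\Omega$ — yields the analogue of \eqref{estimate1} and \eqref{estimate2} with $\three$ replaced by $2\three$ on the data.

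With these estimates in hand, I set $u_0\equiv 0$ and define $u_k$ by \eqref{eqn3} with $F_p(u_{k-1})$ on the right. Let
\[
M_k=\sum_{|\alpha|\le 2}\Bigl(\norm{\Gamma^\alpha u_k}_{L_t^pL_x^{s_\gamma}([0,\infty)\times|x|<2R)}+\norm{|x|^{(n-1)(\frac12-\frac1p)-\frac1p}\Gamma^\alpha u_k}_{L_t^pL_r^pL_\omega^2([0,\infty)\times|x|>2R)}\Bigr).
\]
Using H\"older in the radial variable and Sobolev in $\omega$ (exactly as in \cite{KJHCY}), one estimates the nonlinear source $F_p(u_{k-1})$ in the dual norm by $CM_{k-1}^{p}$ (this is where $p>p_c$ and the precise choice of $\gamma$ enter). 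The resulting recursion $M_k\le C(\three')+CM_{k-1}^{p}$ together with the Cauchy-difference version yields a uniform bound and contraction once $\three'<\three_0$ is small enough, producing the global solution for the first claim.

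For the sharp life-span claim with $n=3$, $\gamma=\frac12-\frac1p$, $p\in(2,1+\sqrt2)$, the weighted Strichartz estimates have already been built up in Section 3 (Proposition \ref{3.4}, Corollary after Proposition 3.6, and estimates \eqref{estimate1}, \eqref{estimate2}). I use the same Picard iteration with
\[
M_k(T)=\sum_{|\alpha|\le 2}\Bigl(\norm{\Gamma^\alpha u_k}_{L_t^pL_x^{s_\gamma}([0,T]\times|x|<2R)}+A(T)^{-1}\norm{|x|^{\frac{-\frac12-\gamma}p}\Gamma^\alpha u_k}_{L_t^pL_r^pL_\omega^2([0,T]\times|x|>2R)}\Bigr),
\]
where $A(T)=T^{(-p+\frac1p+2)/p}$. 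The dual-norm estimate for $F_p(u_{k-1})$ now produces a factor of $A(T)^{p-1}$ from the $A(T)^{-1}$ weighting on the weighted piece (together with the Sobolev/H\"older reduction), so the recursion takes the shape
\[
M_k(T)\le C\three'+CA(T)^{p-1}M_{k-1}(T)^{p}.
\]
Solving $C A(T)^{p-1}(C\three')^{p-1}\le \tfrac12$ for the boundary of closure gives exactly $T\sim\three'^{p(p-1)/(p^2-2p-1)}$, which is $T_{\three'}$ as claimed. The contraction on differences $u_{k+1}-u_k$ is identical in form, proving uniqueness. The main technical obstacle in both parts is the $\eta+\three$ derivative loss coming from Hypothesis B' and the local Strichartz estimate: this forces one to use $\tilde H^\gamma_{2\three}$ on the data rather than $\dot H^\gamma$, which is exactly why \eqref{databd} is stated with a $\tilde H^\gamma_{2\three}$ norm and why the smallness $\three_0$ must be chosen after fixing $\three>0$.
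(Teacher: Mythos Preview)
Your overall strategy---define a weighted mixed norm $X_{\gamma,p}$, feed it through Theorem \ref{mainest2}, commute with $\Gamma=\{\partial_t,Z\}$, and run Picard iteration on $M_k$---is precisely the paper's. Two points need correction, though.

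First, for the inhomogeneous estimate you invoke Corollary \ref{cor1.5}, which places $\Lambda^{2(\varepsilon+\eta)}F$ in a dual $L^{r'}_tY'$ norm. The paper instead applies Duhamel directly to the homogeneous bound \eqref{prop11} to get $\|u\|_{L^p_tX}\lesssim\|F\|_{L^1_t\tilde H^{\gamma-1}_\varepsilon}$, and then converts the $\tilde H^{\gamma-1}_\varepsilon$ norm via the trace-type inequality \eqref{3.16} into the weighted $L^1_rL^2_\omega$ norm on $|x|>2R$ plus a local $L^{s'_{1-\gamma-\varepsilon}}$ norm (Corollary \ref{cor4.3}). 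This is cleaner for the nonlinear step: the $\varepsilon$-loss appears only as a harmless shift of the local Lebesgue exponent, and no fractional derivatives land on $F_p(u)$. Your route via Corollary \ref{cor1.5} would require both a fractional chain rule for $|u|^p$ and verification that a suitable dual quadruple $(Y,1-\gamma,\eta,r)$ is almost admissible, neither of which you address.

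Second, in the subcritical recursion the power of $A(T)$ is off. Each of the $p$ copies of $u_{k-1}$ in the weighted dual norm carries a factor $A(T)$ (because $M_{k-1}$ contains $A(T)^{-1}$), so the correct recursion is
\[
M_k(T)\le C\varepsilon'+C\bigl(A(T)M_{k-1}(T)\bigr)^p,
\]
matching the paper's bound $M_k\le C_0\varepsilon'+2C_0C_1(A(S)M_{k-1})^p$. With your $A(T)^{p-1}$, closure would give $T\sim\varepsilon'^{\,p^2/(p^2-2p-1)}$, not the claimed sharp exponent $p(p-1)/(p^2-2p-1)$. Finally, the paper's $M_k$ also carries the energy norms $\|\Gamma^\alpha u_k\|_{L^\infty_t\dot H^\gamma_B}+\|\partial_t\Gamma^\alpha u_k\|_{L^\infty_t\dot H^{\gamma-1}_B}$ (controlled through \eqref{4.14}); you need these to deliver the regularity asserted in the theorem, and the paper's higher-order estimate \eqref{4.14}--\eqref{4.15} is where most of the technical work (cutoffs, elliptic regularity, \eqref{energyest2}) actually sits.
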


Before we turn to the proof of this existence theorem, we will first
employ Theorem \ref{mainest2} to get important estimates that will
be used.

Define $X=X_{\gamma,p}(\Real^n)$ to be the space with the norm
defined by
\begin{equation}
\label{X} \norm h_{X_{\gamma,p}}= \norm
h_{L^{s_\gamma}(|x|<2R)}+(A(S))^{-1}\norm
{|x|^{{-{n}/{2}+1-\gamma}/{p}}h}_{L_r^pL_\omega^2(\{|x|>2R\})},
\end{equation}
where $s_\gamma={2n}/{n-2\gamma}$. When $n=3, p< p_c,
\gamma=\frac12-\frac1p$, we have $S=T$ and $A(T)$ is as defined in
the last section; When $n=3,4,~p>p_c$ and $\gamma=n/2-2/{p-1}$ we
have $S=\infty$ and $A(S)$ is a constant.

Now by using \eqref{est3}, a known result (3.6) in \cite{KJHCY} and
energy estimates, we can adapt the argument in Section 3 to get the
following proposition:
\begin{prop}
 For $n=3$ or $4$, let $u$ be a solution of \eqref{obwaveequation} with
$F=0$, and assume condition \eqref{gamma} or \eqref{gamma2} is
satisfied. Then
\begin{equation}
\label{prop11} \|u\|_{L_t^pX([0,S]\times \Omega)} \lesssim
\|f\|_{\tilde{H}_{\three}^\gamma}+\|g\|_{\tilde{H}_{\three}^{\gamma-1}}.
\end{equation}
\end{prop}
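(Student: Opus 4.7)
The plan is to reduce the proposition to Theorem~\ref{mainest2} by verifying that the quadruple $(X_{\gamma,p},\gamma,0,p)$ is almost admissible in the sense of the definition preceding that theorem; the claim \eqref{prop11} is then exactly the special case of \eqref{se} with $F=0$ and $\eta=0$, since $\tilde H^\gamma_{\three+0}=\tilde H^\gamma_\three$. The work therefore splits into verifying the global Minkowski bound \eqref{1.11} with the appropriate factor $A(S)$, and the local obstacle bound \eqref{1.12} without any loss of derivatives.

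For the global Minkowski bound I would decompose the $X$-norm into its two summands. The far-field weighted piece $(A(S))^{-1}\||x|^{(-n/2+1-\gamma)/p}u\|_{L^p_rL^2_w(|x|>2R)}$ is exactly the content of Proposition~\ref{3.4} in the regime $n=3$, $\gamma=\tfrac12-\tfrac1p$, $p<p_c$ (with growth factor $A(T)=T^{(-p+1/p+2)/p}$), and of the analogous weighted Strichartz estimate (3.6) in \cite{KJHCY} in the regime $n=3,4$, $\gamma=\tfrac n2-\tfrac2{p-1}$, $p>p_c$ (with $A(\infty)$ a constant). The near piece $\|u\|_{L^p_tL^{s_\gamma}(|x|<2R)}$ is handled by Sobolev embedding $\dot H^\gamma(\Rn)\hookrightarrow L^{s_\gamma}(\Rn)$ followed by the standard energy identity (or an interpolation of \eqref{energyest2} in Minkowski space), which produces a bound by $A(S)(\|f\|_{\dot H^\gamma}+\|g\|_{\dot H^{\gamma-1}})$, as required.

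For the local obstacle bound \eqref{1.12}, on $[0,1]\times\Omega$ finite propagation speed (together with a density argument) lets me reduce to data supported in a bounded region, so that the solution itself is supported in a bounded region uniformly in $t\in[0,1]$. The $L^{s_\gamma}$ piece on $|x|<2R$ is then controlled by Sobolev embedding in the spatial variable composed with the homogeneous case of \eqref{energyest2}, yielding a bound by the $\dot H^\gamma\times\dot H^{\gamma-1}$ norm of the data with no derivative loss since $F=0$ and the time interval has unit length. The weighted far-field piece becomes an $L^p_tL^p_rL^2_\omega$ bound on a region where the weight is uniformly bounded above and below away from zero, and it follows from the same Sobolev and energy ingredients. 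This gives \eqref{1.12} with $\eta=0$.

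To conclude I verify the range condition \eqref{pcondition}. Under \eqref{gamma}, a short computation using $p_c<p<(n+3)/(n-1)$ shows $p>2$ and $\gamma=\tfrac n2-\tfrac2{p-1}\in[-\tfrac{n-3}2,\tfrac{n-1}2)$; under \eqref{gamma2} with $n=3$ and $p\in(2,1+\sqrt 2)$, we have $\gamma=\tfrac12-\tfrac1p\in(0,\tfrac12)\subset[0,\tfrac{n-1}2)$. Theorem~\ref{mainest2} then delivers \eqref{prop11}. The main technical obstacle is the far-field weighted Minkowski estimate itself: in the $n=4$ super-critical case our Proposition~\ref{3.4} does not apply directly and one must invoke the weighted Strichartz estimate of \cite{KJHCY} as a black box, while the $n=3$, $p<p_c$ case relies on the real-interpolation argument already developed in Section~3. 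Once that weighted estimate is in hand, the rest of the argument is a routine combination of Sobolev embedding, the energy bounds \eqref{energyest2}, and the almost-admissibility machinery of Theorem~\ref{mainest2}.
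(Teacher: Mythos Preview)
Your approach is correct and is exactly the route the paper takes: the paper's entire argument for this proposition is the one-line remark ``by using \eqref{est3}, a known result (3.6) in \cite{KJHCY} and energy estimates, we can adapt the argument in section~3,'' and the Section~3 argument is precisely what you have reproduced---show $(X_{\gamma,p},\gamma,0,p)$ is almost admissible by checking \eqref{1.11} (far piece from the weighted Strichartz estimates, near piece from Sobolev plus energy) and \eqref{1.12}, then invoke Theorem~\ref{mainest2}.

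One small clarification on your treatment of the local estimate \eqref{1.12}: the density reduction to compactly supported data is not the cleanest way to handle the far-field piece, because the support radius of the approximating data is unbounded and your ``same Sobolev and energy ingredients'' on the resulting annulus would not obviously give a constant independent of that radius. The intended use of finite propagation speed here is more direct: since the obstacle and the perturbation of the metric lie in $\{|x|<R\}$, on $[0,1]\times\{|x|>2R\}$ the obstacle solution $u$ coincides with the free Minkowski solution $u_0$ with the same (zero-extended) data, so the weighted far-field piece of the $X$-norm is bounded immediately by the Minkowski estimate \eqref{1.11} you have already established. With that adjustment your argument goes through without change.
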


Based on the above proposition, it is easy to get the following
corollary with forcing term added.
\begin{cor}
\label{cor4.3} For $n=3,4$, let $u$ be a solution of
\eqref{obwaveequation}, and assume condition \eqref{gamma} or
\eqref{gamma2} is satisfied. Then
\begin{multline}
\label{est11} \norm
{u}_{L_t^pL_x^{s_\gamma}([0,S]\times\{|x|<2R\})}+(A(S))^{-1}\norm
{|x|^{{-{n}/{2}+1-\gamma}/p}u}_{L_t^pL_r^pL_\omega^2([0,S]\times \{|x|>2R\})}\less
\|f\|_{\tilde{H}_{\three}^\gamma}+\|g\|_{\tilde{H}_{\three}^{\gamma-1}}\\
 +\norm
F_{L_t^1L_x^{s_{1-\gamma-\three}'}(\Real_+\times\{|x|<2R\})}+\norm{|x|^{-{n}/{2}+1-\gamma}F}_{L_t^1L_r^1L_\omega^2(\Real_+
\times\{|x|>2R\})}.
\end{multline}
\end{cor}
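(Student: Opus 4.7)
The plan is to reduce Corollary \ref{cor4.3} to Proposition \ref{prop11} via Duhamel's principle, and then bound the intermediate norm $\|F(s,\cdot)\|_{\tilde H^{\gamma-1}_\three(\Omega)}$ by splitting $F$ according to the regions $|x|<2R$ and $|x|>2R$, using a dual Sobolev embedding on the inner piece and the weighted Hardy-Sobolev estimate (3.7) of \cite{KJHCY} on the outer piece.

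First I would write $u=v+w$, where $v$ solves the homogeneous problem with data $(f,g)$ and $w$ solves the inhomogeneous problem with zero data and forcing $F$. Proposition \ref{prop11} immediately bounds $\|v\|_{L^p_tX}$ by $\|f\|_{\tilde H^\gamma_\three(\Omega)}+\|g\|_{\tilde H^{\gamma-1}_\three(\Omega)}$. For $w$, use Duhamel's formula $w(t,\cdot)=\int_0^t U(t-s)F(s,\cdot)\,ds$, where $U(\tau)$ denotes the forward propagator that sends data $(0,h)$ to the corresponding homogeneous solution at time $\tau$. Minkowski's integral inequality in the $L^p_tX$ norm, followed by Proposition \ref{prop11} applied to each fixed-$s$ homogeneous solution with data $(0,F(s,\cdot))$, reduces the task to establishing
\begin{equation*}
\int_0^S \|F(s,\cdot)\|_{\tilde H^{\gamma-1}_\three(\Omega)}\,ds \lesssim \|F\|_{L^1_tL^{s'_{1-\gamma-\three}}_x([0,S]\times|x|<2R)}+\||x|^{-\frac{n}{2}+1-\gamma}F\|_{L^1_tL^1_rL^2_w([0,S]\times|x|>2R)}.
\end{equation*}

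Next I would fix a smooth cutoff $\chi$ equal to $1$ on $|x|\le 2R$ and supported in a slightly larger ball, and decompose $F=\chi F+(1-\chi)F$. For the inner piece $\chi F$, the dual Sobolev embedding $L^{s'_{1-\gamma-\three}}(\Rn)\hookrightarrow \dot H^{\gamma+\three-1}(\Rn)$, which is valid under the range condition on $\gamma$ in \eqref{gamma} and \eqref{gamma2}, combined with the equivalent form \eqref{norm2} of the $\tilde H^{\gamma-1}_\three$ norm, yields $\|\chi F(s,\cdot)\|_{\tilde H^{\gamma-1}_\three(\Omega)}\lesssim \|F(s,\cdot)\|_{L^{s'_{1-\gamma-\three}}(|x|<2R)}$. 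For the outer piece $(1-\chi)F$, which sits in flat Euclidean space away from any obstacle or metric perturbation, I would invoke the weighted Hardy-Sobolev estimate (3.7) of \cite{KJHCY},
\begin{equation*}
\|\varphi\|_{\dot H^{\gamma-1}(\Rn)}\lesssim \||x|^{-\frac{n}{2}+1-\gamma}\varphi\|_{L^1_rL^2_w},\qquad \tfrac{1}{2}<1-\gamma<\tfrac{n}{2},
\end{equation*}
whose range hypothesis is satisfied in both \eqref{gamma} and \eqref{gamma2}. Integrating in $s$ and summing the two contributions produces \eqref{est11}.

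The main obstacle will be reconciling the extra $\three$-smoothness built into $\tilde H^{\gamma-1}_\three$ with the weighted Hardy-Sobolev estimate for the outer piece, which only delivers $\dot H^{\gamma-1}$ control. Since $\three>0$ is arbitrarily small, this gap can be closed either by a $\three$-loss variant of (3.7) (trading $\three$ derivatives against extra weight decay), or, in the eventual application with $F=F_p(u)$ in the iteration scheme for \eqref{obeq2}, by controlling the high-frequency part of $(1-\chi)F$ via the additional regularity of $u$ supplied by the energy estimate \eqref{energyest2}.
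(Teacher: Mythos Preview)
Your approach is essentially the paper's. The only point where you hesitate---the $\three$-gap on the outer piece---is resolved exactly by your option (a), and the paper carries this out explicitly. Using the equivalence $\|h\|_{\tilde H^{\gamma-1}_\three}\approx \|h\|_{\dot H^{\gamma-1}}+\|h\|_{\dot H^{\gamma+\three-1}}$, one applies the weighted Hardy--Sobolev bound \eqref{3.16} (the paper's form of (3.7) in \cite{KJHCY}, already split into inner and outer parts) at \emph{both} regularity levels $\gamma-1$ and $\gamma+\three-1$. On $|x|>2R$ this produces the two weights $|x|^{-\frac n2+1-\gamma}$ and $|x|^{-\frac n2+1-\gamma-\three}$; since $|x|^{-\three}\lesssim 1$ there, the second is dominated by the first. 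On $|x|<2R$ one gets exponents $s'_{1-\gamma}$ and $s'_{1-\gamma-\three}$, and since $s'_{1-\gamma}<s'_{1-\gamma-\three}$, H\"older on the bounded region gives $\|F\|_{L^{s'_{1-\gamma}}}\lesssim\|F\|_{L^{s'_{1-\gamma-\three}}}$. This closes the gap cleanly at the linear level, so your option (b)---deferring the issue to the nonlinear iteration---is unnecessary.
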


\begin{proof}
By \eqref{prop11}, we can assume $f=g=0$. By Duhamel's principle, we
have
$$\begin{aligned}LHS&\less \norm F_{L_t^1\tilde{H}_{\three}^{\gamma-1}(\Real_+\times\Omega)}\\
&\less \norm F_{L_t^1\dot{H}^{\gamma-1}(\Real_+\times\Omega)}+\norm
F_{L_t^1\dot{H}^{\gamma+\three-1}(\Real_+\times\Omega)}.
\end{aligned}
$$
Recall that the dual version of the trace lemma and Sobolev
embedding gives (see (3.16) of \cite{KJHCY}):
\begin{equation}
\label{3.16} \norm g_{\dot{H}^{\gamma-1}}\less
\norm{|x|^{-{n}/{2}+1-\gamma}g}_{L_r^1L_\omega^2(\{|x|>2R\})}+\norm
g_{L^{s_{1-\gamma}'}(\{|x|<2R\})}, \quad {\rm if}\;
{1}/{2}<1-\gamma<{n}/{2}.
\end{equation}
Here the condition ${1}/{2} <1-\gamma<{n}/{2}$ is satisfied owing to
\eqref{gamma} or \eqref{gamma2}.\\
If we use \eqref{3.16}, then we get
$$\begin{aligned}
\norm F_{L_t^1\dot{H}^{\gamma-1}([0,S]\times\Omega)}&+\norm
F_{L_t^1\dot{H}^{\gamma+\three-1}([0,S]\times\Omega)}\less\\
&\norm{|x|^{-{n}/{2}+1-\gamma}F}_{L_t^1L_r^1L_\omega^2([0,S]\times\{|x|>2R\})}
+\norm F_{L_t^1L_x^{s_{1-\gamma}'}([0,S]\times\{|x|<2R\})}\\
&+\norm{|x|^{-{n}/{2}+1-\gamma-\three}F}_{L_t^1L_r^1L_\omega^2([0,S]\times\{|x|>2R\})}+\norm F_{L_t^1L_x^{s_{1-\gamma-\three}'}([0,S]\times\{|x|<2R\})}\\
&\less
\norm{|x|^{-{n}/{2}+1-\gamma}F}_{L_t^1L_r^1L_\omega^2([0,S]\times\{|x|>2R\})}+\norm
F_{L_t^1L_x^{s_{1-\gamma-\three}'}([0,S]\times\{|x|<2R\})},
\end{aligned}
$$
when $\three>0$ is small enough, which completes the proof.\\
\end{proof}

By modifying the proof of corresponding estimates in \cite{KJHCY},
we get the following higher order estimates of \eqref{energyest2}
and \eqref{est11}, which are key to prove the existence theorem.
\begin{prop}
$($Higher order Energy and Strichartz Estimates$)$. Suppose that
data $(f,g,F)$ satisfies the $H_B^2\times H^1_B\times H_B^1$
boundary conditions. Under the conditions in Corollary \ref{cor4.3},
the following estimates hold:

\begin{align}\label{4.14}
&\sum_{|\alpha|\le 2}
\Bigl( \|\, \Gamma^\alpha u
\|_{L^\infty_t \dot{H}_B^\gamma}
+\|\partial_t\Gamma^\alpha u\|_{L^\infty_t\dot{H}_B^{\gamma-1}}
\Bigr)\lesssim \sum_{|\alpha|\le 2}
\Bigl( \|Z^\alpha f\|_{\tilde H^\gamma_{2\three}}
+\|Z^\alpha g\|_{\tilde H^{\gamma-1}_{2\three}}\Bigr)
\\
&+\sum_{|\alpha|\le 2} \Bigl(\|\, |x|^{-\frac{n}2+1-\gamma}
\Gamma^\alpha F\|_{L^1_tL^1_rL^2_\omega(\Real_+\times \{|x|>2R\})}+
\|\Gamma^\alpha F\|_{L^1_tL_x^{s'_{1-\gamma-2\three}} (\Real_+\times
\{x\in \Omega: |x|<2R\})}\Bigr),\notag
\end{align}
and
\begin{align}\label{4.15}
&\sum_{|\alpha|\le 2}
\Bigl( \|\, |x|^{\frac{n}2-\frac{n+1}p-\gamma} \Gamma^\alpha u
\|_{L^p_tL^p_rL^2_\omega(\Real_+\times \{|x|>2R\})}
+\|\Gamma^\alpha u\|_{L^p_tL^{s_\gamma}_x(\Real_+\times \{x\in \Omega: |x|<2R\})}
\Bigr)
\\
&\lesssim \sum_{|\alpha|\le 2}
\Bigl( \|Z^\alpha f\|_{\tilde H^\gamma_{2\three}}
+\|Z^\alpha g\|_{\tilde H^{\gamma-1}_{2\three}}\Bigr)
\notag
\\
&+\sum_{|\alpha|\le 2}
\Bigl(\|\, |x|^{-\frac{n}2+1-\gamma}
\Gamma^\alpha F\|_{L^1_tL^1_rL^2_\omega(\Real_+\times \{|x|>2R\})}+
\|\Gamma^\alpha F\|_{L^1_tL_x^{s'_{1-\gamma-2\three}}
(\Real_+\times \{x\in \Omega: |x|<2R\})}\Bigr).\notag
\end{align}
\end{prop}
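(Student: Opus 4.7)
The plan is to apply Corollary~\ref{cor4.3} and the $L^\infty_t$ part of \eqref{energyest2} to $v=\Gamma^\alpha u$ for each multi-index with $|\alpha|\le 2$, and to convert the resulting right-hand sides into the norms appearing in \eqref{4.14} and \eqref{4.15} by means of commutator estimates combined with Hypothesis~B'.

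First I would compute $(\partial_t^2-\Delta_\g)v$ and the Cauchy data of $v$. Since $\Delta_\g$ and the boundary operator $B$ are time-independent, $\partial_t$ commutes with the whole problem, so $\partial_t^k u$ solves \eqref{obwaveequation} with forcing $\partial_t^k F_p(u)$ and initial data that can be reexpressed in terms of $f$, $g$, $F$ via the equation itself. For the spatial vector fields $Z\in\{\partial_l,\ x_j\partial_k-x_k\partial_j\}$ the situation is more delicate: neither $\Delta_\g$ nor the boundary condition is preserved in general, but since $\g$ equals the Euclidean metric on $|x|>R$ and rotations commute with $\Delta$, each commutator $[\Delta_\g,Z]$ is a second order differential operator with coefficients compactly supported in $|x|\le R$. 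Iterating yields an identity of the form
$$(\partial_t^2-\Delta_\g)\Gamma^\alpha u=\Gamma^\alpha F_p(u)+\sum_{|\beta|<|\alpha|}P_{\alpha,\beta}\Gamma^\beta u,$$
where each $P_{\alpha,\beta}$ is of order at most two and has smooth coefficients supported in $|x|\le R$.

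Next I would feed $v=\Gamma^\alpha u$ into Corollary~\ref{cor4.3}. The term $\Gamma^\alpha F_p(u)$ produces directly the two contributions on the right of \eqref{4.15}. The commutator piece $G_\alpha=\sum_{|\beta|<|\alpha|}P_{\alpha,\beta}\Gamma^\beta u$ is compactly supported in $|x|\le R$, so both of its mixed-norm contributions in Corollary~\ref{cor4.3} are dominated, via Cauchy--Schwarz in time and Sobolev embedding in space, by $\|G_\alpha\|_{L^2_tH_B^{\gamma+\three-1}(|x|\le 2R)}$, which is controlled through Hypothesis~B' applied to $\Gamma^\beta u$ with $|\beta|<|\alpha|$ at the price of an additional $\three$ derivatives on the data. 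Inducting on $|\alpha|$, the total loss at $|\alpha|=2$ accumulates to at most $2\three$, matching the norm $\tilde H^\gamma_{2\three}$ in \eqref{databd}. The boundary compatibility failure of $Z$ is addressed, as in the proof of Theorem~\ref{mainest2}, by the splitting $\Gamma^\alpha u=\beta\Gamma^\alpha u+(1-\beta)\Gamma^\alpha u$: the piece $(1-\beta)\Gamma^\alpha u$ satisfies a free wave equation on which the Minkowski space vector field method works, while $\beta\Gamma^\alpha u$ has compactly supported data and forcing and Hypothesis~B' is directly available.

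The energy estimate \eqref{4.14} is proved identically, using the $L^\infty_t\dot H_B^\gamma$ bound from \eqref{energyest2} in place of Corollary~\ref{cor4.3}. The main obstacle I anticipate is bookkeeping: keeping exact track of the $\three$-loss through the induction and verifying that it stays bounded by $2\three$ for $|\alpha|\le 2$, together with expressing the Cauchy data of $\Gamma^\alpha u$ in terms of $f$, $g$, $F$, and their $Z$-derivatives in a way that respects the order-$2$ compatibility conditions. A secondary difficulty is the Neumann case, where $Z$ need not preserve $B$; this is handled by the same cutoff splitting iterated once per application of $Z$, absorbing each boundary mismatch into a commutator supported in $|x|\le R$ that is bounded by the inductive hypothesis.
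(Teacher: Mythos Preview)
Your overall strategy---apply the linear estimates to $\Gamma^\alpha u$, treat the commutator $[\Delta_\g,Z]$ as a second-order operator with coefficients supported in $|x|\le R$, and split with a cutoff---is the same scaffold the paper uses. But there is a genuine gap in how you handle the near-boundary piece, and the fix you propose (``iterated cutoff splitting'') does not close it.

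The difficulty is this: after the splitting $\Gamma^\alpha u=(1-\beta_0)\Gamma^\alpha u+\beta_0\Gamma^\alpha u$, the piece $\beta_0\Gamma^\alpha u$ is supported near the obstacle but still fails the boundary condition $B$, so neither Corollary~\ref{cor4.3} nor Hypothesis~B' applies to it. Iterating the cutoff cannot cure this, since each iteration leaves a piece supported in a neighborhood of $\partial\Omega$ with the same defect. Relatedly, your inductive step asks for $\|P_{\alpha,\beta}\Gamma^\beta u\|_{L^2_tH^{\gamma+\three-1}}$ with $P_{\alpha,\beta}$ of order two, hence for local $L^2_tH^{\gamma+\three+1}$ control of $\Gamma^\beta u$; but your inductive hypothesis only supplies $L^p_tX$ and $L^\infty_t\dot H^\gamma$, and applying \eqref{energyest2} to $\Gamma^\beta u$ at regularity $\gamma+1$ both violates the boundary condition and (for $n=3$) the restriction $\gamma<(n-1)/2$.

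The paper supplies the missing idea: on the support of $\beta_0$, the coefficients of the rotation fields are bounded, so $\sum_{|\alpha|\le2}\|\beta_0\Gamma^\alpha u\|_{H^\gamma}$ is dominated by $\sum_{j\le2}\|\beta_1\partial_t^j u\|_{H^{\gamma+2-j}}$. Since $\partial_t$ commutes with the full boundary value problem, \eqref{energyest2} applies cleanly to $\partial_t^j u$ at regularity $\gamma$; the extra spatial regularity for $j=0,1$ is then recovered from elliptic regularity together with the equation $\Delta_\g u=\partial_t^2u-F$ (and a Cauchy--Schwarz/Parseval interpolation for $j=1$). This ``trade $\Gamma$ for $\partial_t$ plus elliptic regularity'' step is what simultaneously handles the boundary condition and supplies the extra local derivative your commutator estimate needs; it replaces your induction on $|\alpha|$. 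The accumulation of the loss to $2\three$ then comes from two applications of \eqref{energyest2} (once for the commutator forcing, once for the $\partial_t^j u$ bounds), not from two inductive steps.
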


\begin{proof}
We will first deal with the Cauchy data for $\Gamma^\alpha u$. This
is clear if $\Gamma^\alpha$ is replaced by $Z^\alpha$. On the other
hand, the Cauchy data is $(g,\Delta_g f+F(0,\cd))$ for $\partial_t
u$ and $(\Delta_g f+F(0,\cd), \Delta_g g+\partial_t F(0,\cd))$ for
$\partial_t^2 u$, so we have
\begin{eqnarray*}
\|g\|_{\tilde H^\gamma_\three}+ \|\Delta_\g f\|_{\tilde
H^{\gamma-1}_\three\cap \tilde
H^{\gamma}_\three}+\|F\|_{L^\infty_t\tilde H^{\gamma-1}_\three\cap
L^\infty_t\tilde
H^{\gamma}_\three}+\|\partial_tF\|_{L^\infty_t\tilde
H^{\gamma-1}_\three}&+&\|\Delta_g g\|_{\tilde H^{\gamma-1}_\three}\less\\
\sum_{|\alpha|\le 2} \Bigl(\,\|Z^\alpha f\|_{\tilde H^\gamma_\three}
+\|Z^\alpha g\|_{\tilde
H^{\gamma-1}_\three}\Bigr)&+&\sum_{|\alpha|\le 2} \|\Gamma^\alpha
F\|_{L^1_t\tilde H^{\gamma-1}_\three}\,,
\end{eqnarray*}
where we use Sobolev embedding in the time variable $t$ for $(F,
\partial_t F)$. If we use \eqref{3.16} to control the last term $\sum_{|\alpha|\le 2} \|\Gamma^\alpha
F\|_{L^1_t\tilde H^{\gamma-1}_\three}$, then we get \eqref{4.14} and
\eqref{4.15} for the Cauchy data part of $\Gamma u$.

Let us now give the argument for \eqref{4.15}.  Fix $\beta_0\in
C^\infty_0$ satisfying $\beta_0=1$ for $|x|\le R$ and vanishing for
$|x|>2R$.  Let
$$
\Gamma^\alpha u=(1-\beta_0)\Gamma^\alpha u+\beta_0\Gamma^\alpha u=v+w.
$$
Since $\Gamma$ commutes with $\square_g$ when $|x|\ge R$, we have
$$
\begin{cases}
\square_g  v = (1-\beta_0)\Gamma^\alpha
F-[\beta_0,\Delta_\g]\Gamma^\alpha u\,,\\
v(0,\cd)=((1-\beta_0)\Gamma^\alpha u(0,\cd),\;
\partial_tv(0,\cd)=\partial_t (1-\beta_0) \Gamma^\alpha u(0,\cd).
\end{cases}
$$
The initial data has been taken care of from the discussion above,
and the first nonlinear term is dominated by the right hand side of
\eqref{4.15} by \eqref{est11}. For the second nonlinear term, we use
Proposition \ref{prop2.11} and control it by
\begin{equation}\label{3.20}
\sum_{|\alpha|\le 2} \|\, [\beta_0,\Delta_\g] \Gamma^\alpha
u\|_{L^2_tH^{\gamma-1}_B} \lesssim \sum_{j\le
2}\|\beta_1\partial^j_t u\|_{L^2_tH^{\gamma+2-j}_B},
\end{equation}
assuming that $\beta_1$ equals one on the support of $\beta_0$ and
is supported in  $R<|x|< 2R$. Note that $[\Box_g, \partial_t^2]=0$,
if we use \eqref{energyest2} for $\partial_t^2 u$ and Duhamel's
principle for the forcing term $\partial_t^2 F$, we can control
$\|\beta_1\partial_t^2 u\|_{L^2_tH^{\gamma}_B}$ by the right hand
side of \eqref{4.15}. On the other hand, by Cauchy-Schwarz and
Parseval's Formula,
$$
\|\beta_1\partial_t u\|^2_{L^2_tH^{\gamma+1}_B} \lesssim
\|\beta_1\partial^2_t u\|_{L^2_tH^{\gamma}_B}\, \|\beta_1
u\|_{L^2_tH^{\gamma+2}_B}.
$$
So it suffices to dominate $\|\beta_1 u\|_{L^2_tH^{\gamma+2}_B}$. By
elliptic regularity of the operator $\Delta_g$, we have
\begin{align*}
\|\beta_1 u\|_{L^2_tH^{\gamma+2}_B}&\lesssim
\|\beta_2\Delta_\g u\|_{L^2_tH^{\gamma}_B}+
\|\beta_2 u\|_{L^2_tH^{\gamma}_B}\\
& \lesssim \|\beta_2\partial_t^2 u\|_{L^2_tH^{\gamma}_B}+ \|\beta_2
u\|_{L^2_tH^{\gamma}_B}+ \|\beta_2 F\|_{L^2_tH^{\gamma}_B},
\end{align*}
where $\beta_2\in C_0^\infty$ equals one on support of $\beta_1$ and
is supported in the set where $|x|<2R$. The first two terms are
dominated as above using \eqref{energyest2} and Duhamel's principle.
For the last term, Sobolev embedding and duality yields
\begin{align}\label{3.21}
\|\beta_2 F\|_{L^2_tH^{\gamma}_B}
&\lesssim\sum_{|\alpha|\le 1}\|\partial_x^\alpha F
\|_{L^2_tL^{s'_{1-\gamma}}(\Real_+\times\{x\in\Omega:|x|\le 2R\})}\\
&\lesssim\sum_{|\alpha|\le 2}\|\partial_{t,x}^\alpha F
\|_{L^1_tL^{s'_{1-\gamma-\three}}(\Real_+\times\{x\in\Omega:|x|\le 2R\})}.
\notag
\end{align}

Thus we are done with the proof of
\eqref{4.15} when $\Gamma^\alpha u$ is replaced by $v$.

For $w=\beta_0 \Gamma^\alpha u$, the coefficients of $\Gamma$ are
bounded on support of $\beta_0$, so by Sobolev embedding
\begin{align*}
\sum_{|\alpha|\le 2}\;
\|\beta_0\Gamma^\alpha u
\|_{L^p_tL^{s_\gamma}_x(\Real_+\times\Omega)}
&\lesssim\sum_{|\alpha|\le 2}\;\|\beta_1\Gamma^\alpha u\|_{L^p_t\dot{H}^{\gamma}_B}
\\
&\lesssim\sum_{|j|\le 2}\;\Bigl(\|\beta_1\partial^j_t u\|_{L^2_tH^{\gamma+2-j}_B}
+\|\beta_1\Gamma^j u\|_{L^\infty_t\dot{H}^{\gamma}_B}\Bigr).
\end{align*}
The first term is dominated as above, and the bound for the second
term comes from \eqref{4.14}, so we are done with proof of
\eqref{4.15}.

Now we turn to the proof of \eqref{4.14}.

As before we first consider the inequality where $\Gamma^\alpha u$
is replaced by $v=(1-\beta_0)\Gamma^\alpha u$ in \eqref{4.14}. The
inequality involving initial data has been taken care of in the
first paragraph of the proof, and the first nonlinear term is from
energy estimates in $\Rn$, Duhamel's principle and \eqref{3.16}. For
the remaining term by \eqref{energyest2} we see that it is
controlled by
\begin{equation}\label{3.200}
\sum_{|\alpha|\le 2} \|\, [\beta_0,\Delta_\g] \Gamma^\alpha
u\|_{L^2_tH^{\gamma+\three-1}_B} \lesssim \sum_{j\le
2}\|\beta_1\partial^j_t u\|_{L^2_tH^{\gamma+\three+2-j}_B}.
\end{equation}
By almost the same argument as above we get the desired bound in
\eqref{4.14}.

Now we are only left with $w=\beta_0\Gamma^\alpha u$. First notice
that the left hand side of \eqref{4.14} with $w$ is dominated by
$\sum_{j\le 3}\|\beta_1\partial_t^j
u\|_{L^\infty_tH^{2+\gamma-j}_B}\,.$ For the case $j=0,1$, since
$$\begin{cases}
\Box_\g(\beta_1 u)=\beta_1 F+[\Delta_\g,\beta_2] u\\
(\beta_1u,\partial_t\beta_1u)|_{t=0}=(\beta_1f,\beta_1g),
\end{cases}
$$
we use \eqref{energyest1} with the Duhamel formula to bound
\begin{multline*}
\|\beta_1 u\|_{L^\infty_tH^{\gamma+2}_B}+
\|\beta_1 \partial_t u\|_{L^\infty_tH^{\gamma+1}_B}\\
\lesssim
\|\beta_1 f\|_{H^{\gamma+2}_B}+
\|\beta_1 g\|_{H^{\gamma+1}_B}+
\|\beta_2 u\|_{L^2_tH^{\gamma+\three+2}_B}+
\|\beta_1 F\|_{L^1_tH^{\gamma+\three+1}_B}.
\end{multline*}
The term on the right involving $u$ was controlled previously; on
the other hand, by Sobolev embedding,
$$
\|\beta_1 F\|_{L^1_tH^{\gamma+\three+1}_B}\lesssim\sum_{|\alpha|\le 2}
\|\partial^\alpha_x F\|_{L^1_tL_x^{s'_{1-\gamma-\three}}}\,.
$$
To handle the terms for $j=2,3$ we use the equation to bound
$$
\sum_{j=2,3}\|\beta_1\partial_t^j u\|_{L^\infty_tH^{2+\gamma-j}_B}\le
\sum_{j=0,1}\Bigl(\|\beta_1\partial_t^j\Delta_\g u\|_{L^\infty_tH^{\gamma-j}_B}+
\|\beta_1\partial_t^j F\|_{L^\infty_tH^{\gamma-j}_B}\Bigr).
$$
The terms involving $\Delta_\g u$ are dominated by
$\|\beta_2\partial_t^j u\|_{L^\infty_tH^{\gamma+2-j}_B}$ with
$j=0,1$. The terms involving $F$ are controlled for $j=1$ by Sobolev
Embedding Theorem, and for $j=0$ by observing that \eqref{3.21}
holds with $L^2_t$ replaced by $L^\infty_t$. This completes the
proof of \eqref{4.14}.
\end{proof}

\noindent\textbf{Proof of Theorem \ref{existthm}}:\\
We will adapt the argument from \cite{KJHCY}.
First, let $u_0$ solve the Cauchy problem \eqref{obwaveequation} with $F=0$.
We iteratively define $u_k$, for $k\ge 1$, by solving
$$
\begin{cases}
(\partial^2_t-\Delta_\g)u_k(t,x)=F_p(u_{k-1}(t,x))\,,
\quad (t,x)\in \Real_+\times \Omega
\\
u_k(0,\cd)=f,\;  \partial_t u_k(0,\cd)=g\\
(Bu_k)(t,x)=0,\quad \text{on } \, \Real_+\times \partial \Omega.
\end{cases}
$$
Our aim is to show that if the constant $\varepsilon'>0$ in \eqref{databd} is
small enough, then so is
\begin{multline*}
M_k = \sum_{|\alpha|\le 2}\,
\Bigl( \,
\bigl\|\Gamma^\alpha u_k\bigr\|_{L^\infty_t\dot H^\gamma_B([0,S]\times\Omega)}+
\bigl\|\partial_t\Gamma^\alpha u_k
\bigr\|_{L^\infty_t\dot H^{\gamma-1}_B([0,S]\times\Omega)}
\\+
(A(S))^{-1}\bigl\|\, |x|^{\frac{-\frac{n}{2}+1-\gamma}p} \Gamma^\alpha u_k
\bigr\|_{L^p_tL^p_rL^2_\omega([0,S]\times \{|x|>2R\})}
+\|\Gamma^\alpha u_k
\|_{L^p_tL^{s_\gamma}_x([0,S]\times \{x\in \Omega: \, |x|<2R\})}\, \Bigr)
\end{multline*}
for every $k=0,1,2,\dots$.

For $k=0$, it follows by \eqref{4.14} and \eqref{4.15} that $M_0\le
C_0\varepsilon'$, with $C_0$ a fixed constant. More generally,
\eqref{4.14} and \eqref{4.15} yield that
\begin{align}\label{3.23}
M_k\le C_0\varepsilon' +C_0\sum_{|\alpha|\le 2} \,
\Bigl( \,& \bigl\| \, |x|^{-\frac{n}2+1-\gamma}
\Gamma^\alpha F_p(u_{k-1})
\bigr\|_{L^1_tL^1_rL^2_\omega(\Real_+\times \{|x|>2R\})}
\\
&+\|\Gamma^\alpha F_p(u_{k-1})
\|_{L^1_tL^{s_{1-\gamma-2\three}'}_x(\Real_+\times \{x\in \Omega: \,
|x|<2R\})}\Bigr)\,. \notag
\end{align}
Note that our assumption \eqref{1.4} on the nonlinear term $F_p$ implies that
for small $v$
$$
\sum_{|\alpha|\le 2}|\Gamma^\alpha F_p(v)|\lesssim |v|^{p-1}
\sum_{|\alpha|\le 2}|\Gamma^\alpha v|+|v|^{p-2}
\sum_{|\alpha|\le 1}|\Gamma^\alpha v|^2\,.
$$
Furthermore, since $u_k$ will be locally of regularity
$H_B^{\gamma+2}\subset L^\infty$ and
$F_p$ vanishes at $0$, it follows that $F_p(u_k)$ satisfies the
$B$ boundary conditions if $u_k$ does.\\
Since the collection $\Gamma$ contains vectors spanning the tangent space
to $S^{n-1}$, by Sobolev embedding for $n=3,4$ we have
$$
\|v(r\cd)\|_{L^\infty_\omega}
+\sum_{|\alpha|\le 1}\|\Gamma^\alpha v(r\cd)\|_{L^4_\omega}
\lesssim \sum_{|\alpha|\le 2}
\|\Gamma^\alpha v(r\cd)\|_{L^2_\omega}\,.
$$
Consequently, for fixed $t, r>0$
$$
\sum_{|\alpha|\le 2}\|\Gamma^\alpha F_p(u_{k-1}(t,r\cd) )
\|_{L^2_\omega}\lesssim \sum_{|\alpha|\le 2} \|\Gamma^\alpha u_{k-1}(t,r\cd)
\|^p_{L^2_\omega}\,.
$$
Thus the first summand in the right hand side of \eqref{3.23} is
dominated by
$C_1\big(A(S)M_{k-1}\big)^p\,.$\\
We next observe that, since $s_\gamma>2$ and $n\le 4$,
it follows by Sobolev embedding on $\{\Omega\cap|x|<2R\}$ that
$$
\|v\|_{L^\infty(x\in\Omega:|x|<2R)}
+\sum_{|\alpha|\le 1}\|\Gamma^\alpha v\|_{L^4(x\in\Omega:|x|<2R)}
\lesssim \sum_{|\alpha|\le 2}
\|\Gamma^\alpha v\|_{L^{s_\gamma}(x\in\Omega:|x|<2R)}\,.
$$
Since $s_{1-\gamma-2\three}'<2$, it holds for each fixed $t$ that
\begin{multline}
\sum_{|\alpha|\le 2}\|\Gamma^\alpha F_p(u_{k-1}(t,\cd))
\|_{L^{s'_{1-\gamma-2\three}}(x\in\Omega:|x|<2R)}\less\sum_{|\alpha|\le
2}\|\Gamma^\alpha F_p(u_{k-1}(t,\cd))
\|_{L^{2}(x\in\Omega:|x|<2R)}\\
\lesssim \sum_{|\alpha|\le 2} \|\Gamma^\alpha u_{k-1}(t,\cd)
\|^p_{L^{s_\gamma}(x\in\Omega:|x|<2R)}\,.
\end{multline}
The second summand in the right side of \eqref{3.23} is thus
dominated by $C_1M_{k-1}^p\,,$ and we conclude that $M_k\le
C_0\epsilon'+2C_0\,C_1(A(S)M_{k-1})^p$. For $\epsilon'$ sufficiently
small, by the definition of $A(S)$, we obtain
\begin{equation}\label{3.24} M_k\le 2\,C_0\varepsilon', \quad k=1,2,3,\dots
\end{equation}\\
To finish the proof of Theorem \ref{existthm} we need to show that
$u_k$ converges to a solution of the equation
\eqref{obeq2}.  For this it suffices to show that
\begin{multline*}A_k=
(A(S))^{-1}\bigl\| \, |x|^{\frac{-\frac{n}{2}+1-\gamma}p}(u_k-u_{k-1})\,
\bigr\|_{L^p_tL^p_rL^2_\omega
([0,S]\times \{|x|>2R\})}
\\
+\|u_k-u_{k-1}\|_{L^p_tL^{s_\gamma}_x([0,S]\times \{x\in \Omega: \, |x|<2R\})}
\end{multline*}
tends geometrically to zero as $k\to \infty$.  Since
$|F_p(v)-F_p(w)|\lesssim |v-w|(\, |v|^{p-1}+|w|^{p-1}\, )$ when $v$ and
$w$ are small, the proof of \eqref{3.24} can be adapted to show that,
for small $\varepsilon'>0$, there is a uniform constant $C$ so that
$$A_k\le C(A(S))^pA_{k-1}(M_{k-1}+M_{k-2})^{p-1},$$
which, by \eqref{3.24}, implies that $A_k\le \tfrac12A_{k-1}$ for small
$\varepsilon'$.  Since $A_1$ is finite, the claim follows, which finishes
the proof of Theorem \ref{existthm}.
\qed

\end{document}